\theoremstyle{plain}
\newtheorem*{theorem-M}{Main Theorem}
\newtheorem*{corollary-B}{Corollary for Coercive Potentials}
\newtheorem*{theorem-C}{Collateral Theorem}
\newtheorem*{theorem-E}{Existence Theorem}
\newtheorem*{theorem-F}{Finite Cyclic Predecessor Version}
\newtheorem{theorem}{Theorem}[section]
\newtheorem{lemma}[theorem]{Lemma}
\newtheorem{proposition}[theorem]{Proposition}
\newtheorem{corollary}[theorem]{Corollary}
\theoremstyle{definition}
\newtheorem{definition}[theorem]{Definition}
\theoremstyle{remark}
\newtheorem{remark}[theorem]{Remark}
\newcommand{\stacksub}[1]{
\mbox{\scriptsize 
$\begin{array}{c}
#1
\end{array}$}
}
\begin{document}

\title[Dense periodic optimization for CMS via Aubry points]{Dense periodic optimization for countable Markov shift via Aubry points}

\author{Eduardo Garibaldi$^1$ \& Jo\~ao T A Gomes$^2$}

\address{
$^1$ Institute of Mathematics, Statistics and Scientific Computing,
University of Campinas,
13083-859, Campinas, SP, Brazil \\
$^2$ Center of Exact and Technological Sciences, 
Federal University of Rec\^oncavo da Bahia,
44380-000, Cruz das Almas, BA, Brazil
}

\eads{
\mailto{garibaldi@ime.unicamp.br} 
\enspace and \enspace 
\mailto{jtagomes@ufrb.edu.br}
}

\vspace{10pt}
\begin{indented}
\item[] February 2026
\end{indented}

\begin{abstract} \\
For transitive Markov subshifts over countable alphabets, this note ensures that a dense subclass of locally H\"older continuous potentials admits at most a single periodic probability as a maximizing measure.
We resort to concepts analogous to those introduced by Mather and Ma\~n\'e in the study of globally minimizing curves in Lagrangian dynamics.
In particular, given a summable variation potential, we show the existence of a continuous sub-action in the presence of an Aubry point.

\vspace{0.6pc}
\noindent{\it Keywords\,}:
Aubry Set,
Countable Markov Shift,
Ergodic Optimization,
Ma\~n\'e Potential,
Sub-actions,
Peierls Barrier.

\vspace{0.6pc}
\noindent{\it Mathematics Subject Classification number\/}:
28D05, 
37A05, 
37B10. 

\end{abstract}

\hypertarget{sec--introduction}{}
\section{Introduction and Main Results}
\label{section-introduction}

Ergodic optimization is the study of problems relating to maximizing orbits and invariant measures, and maximum ergodic averages. 
The field emerged in the 1990s as a natural extension of classical ergodic theory, seeking to identify invariant measures or orbits that maximize a given potential function. 
A comprehensive survey of the field is provided by Jenkinson~\cite{Jen:ETDS19}, which reviews the theoretical foundations and typical properties of maximizing measures in topological dynamical systems.

The development of ergodic optimization theory has been particularly rich in the context of expanding dynamical systems, where it found deep connections with classical variational principles. 
A pivotal moment in this development was the recognition that key concepts from Aubry-Mather theory for Lagrangian systems, originally formulated by Ma\~n\'e \cite{Man:BSBM97, CDI:BSBM97}, could be adapted to the discrete setting of symbolic dynamics. This adaptation involved translating fundamental tools such as sub-actions -- which serve as analogues to subsolutions of the Hamilton-Jacobi equation -- into the symbolic context. 
The work of Contreras, Lopes, and Thieullen \cite{CLT:ETDS01} represented a landmark achievement in this program, establishing a coherent theoretical framework that revealed previously unsuspected parallels between continuous Lagrangian dynamics and discrete optimization problems. Their contributions included foundational results on calibrated and separating sub-actions, which became essential tools for understanding the structure of maximizing measures in symbolic dynamics. However, this adaptation was not without subtleties: while many tools from Aubry-Mather theory found natural analogues in ergodic optimization, the correspondence is not always direct, and certain fundamental features -- such as the central role of convexity in Aubry-Mather theory -- do not appear in the same form in the general ergodic optimization setting. This synthesis of ideas from classical mechanics and ergodic theory proved to be highly influential in the subsequent development of ergodic optimization (see \cite{Gar:SPR17}). The theoretical framework established through these contributions helped set the stage for addressing increasingly sophisticated questions about the generic behavior of maximizing measures, leading to the groundbreaking results we now celebrate.

We acknowledge with deep respect the late Professor Gonzalo Contreras, whose contributions profoundly shaped the field of ergodic optimization. Beyond his co-authorship of the influential monograph ``Global minimizers of autonomous Lagrangians'' in Aubry-Mather theory, reference \cite{CI:IMPA99}, Contreras made decisive contributions to understanding maximizing measures in expanding systems through his collaboration with Lopes and Thieullen. Most significantly, in \cite{Con:IM16}, Contreras ultimately resolved Ma\~n\'e conjecture for the compact expanding case, demonstrating that for an expanding map on a compact metric space, a generic Lipschitz potential has a unique maximizing measure supported on a single periodic orbit. This theorem, published in \textit{Inventiones Mathematicae} in 2016, resolved a fundamental question in ergodic optimization that had amoung its origins the typically periodic optimization conjecture proposed by Ma\~n\'e in the 1990s. Throughout his distinguished career at CIMAT in Guanajuato, Mexico, Contreras left an indelible mark on the mathematical community, and his passing in 2025 represents a significant loss to the field.

Our main result extends Contreras's framework to non-compact settings. While his periodic-orbit theorem characterized the generic behavior of maximizing measures in compact expanding systems, we investigate how these principles translate to symbolic dynamical systems with countable alphabets. In this context, the absence of compactness introduces new technical challenges that require careful treatment of convergence properties and the behavior of action potentials. By adapting the techniques developed by Contreras and his collaborators to the countable Markov shift setting, we demonstrate that the fundamental insights about periodic maximizers can be preserved even when the classical compactness assumptions are relaxed.

Specifically, ergodic optimization in the context of countable Markov shifts has been mainly focused on ensuring existence and describing properties of optimizing measures \cite{JMU:ETDS06, BG:BBMS10,BF:ETDS13}. 
Through a perturbative approach, we show here the denseness within a class of locally H\"older continuous potentials of the subset formed by those that admit at most one periodic probability as maximizing measure. 
To the best of our knowledge, this is an inaugural result of typicality for optimal measures in this scenario where the compactness of the phase space is not assumed.
Our strategy follows the one adopted for the case of finite alphabets \cite{Con:IM16,HLMXZ:JEMS25,Boc:Notes19}. 
In addition to the main result, another contribution of these notes to ergodic optimization over non-compact phase spaces is the adaptation to a symbolic dynamic setting of the viewpoint developed by Ma\~n\'e for Lagrangian systems \cite{CI:IMPA99}. 
The concepts discussed here had not yet been considered with the depth that we were led to take into account. 
In particular, we had to return to the very concept of the ergodic maximizing constant via Ma\~n\'e's critical value, an attitude that, to our knowledge, had not yet been used in ergodic optimization. 

Let $ \Sigma $ denote a one-sided Markov shift on a countable alphabet, that is, the set of one-sided infinite sequences $ (x_0, x_1, x_2, \ldots) $ where each $ x_i $ belongs to a countable alphabet and consecutive symbols satisfy prescribed transition rules given by an zero-one adjacency matrix, and let $ \sigma \,:\, \Sigma \to \Sigma $ denote the left shift map defined by $ \sigma((x_0, x_1, x_2, \ldots)) = (x_1, x_2, x_3, \ldots) $.
To be concrete, we assume from this point forward that the alphabet is the set of non-negative integers $ \mathbb{Z}_{+} $.
As usual, $ \mathbb{Z}_{+} $ is provided with the discrete topology and $ \Sigma $ with the product topology, the latter being metrizable when, for a fixed $ \lambda \in (0,1) $, the distance between two sequences $ x = (x_0, x_1, \ldots) $ and $ y = (y_0, y_1, \ldots) $ is $ d (x, y) = \lambda^\ell $, where $ \ell $ indicates the first position of disagreement.
We assume throughout the text that the dynamics $ ( \Sigma, \sigma ) $ is topologically transitive (i.e., the orbit of any open set eventually intersects any other open set). For comprehensive treatments of the theory of Markov shifts on countable alphabets, we refer to the monographs of Kitchens \cite{Kit:SPR98} and Lind--Marcus \cite{LM:CUP95}.

We call a potential any continuous function $ A \,:\, \Sigma \to \mathbb{R} $ bounded from above. 
The main objective is to study (when they exist) the $ \sigma $-invariant probabilities that maximize the integral of $ A $ over $ \Sigma $.
The regularity of the potential plays an important role in this analysis. 
In fact, much of the subtlety of results in ergodic optimization lies in this specific aspect.
For subshifts over finite alphabets, for instance, one can either generically have a single maximizing probability supported on a periodic orbit by focusing on H\"older continuous potentials \cite{Con:IM16}, or observe in a dense way the phenomenon of the existence of uncountably many ergodic maximizing measures with full support and positive entropy when taking into account continuous potentials in general \cite{Shi:N18}.

We first consider potentials with summable variation.
For a function $ A : \Sigma \to \mathbb R $ and a non-negative integer $ \ell \geq 0 $, the $ \ell $-th variation is defined as
\[
\mathrm{Var}_{\ell} (A) 
= \sup \left\{ A(x) - A(y) \, : \,  d(x,y) \leq \lambda^\ell \right\}.
\]
For all $ 0 \leq n < m \leq \infty $, we denote $ \mathrm{Var}_{n}^{m} (A) := \sum_{\ell = n}^{m} \mathrm{Var}_{\ell} (A) $. 
We say that $ A $ is of summable variation when
\[
\mathrm{Var}_{1}^{ \infty } (A) := \sum_{\ell = 1}^{\infty} \mathrm{Var}_{\ell} (A) < + \infty.
\]
(Note that no restriction is imposed on the zeroth variation of $A$.)
The space of real-valued functions of summable variation is a disjoint collection of affine spaces, 
each of which is a metric space with respect to the distance
\[
\| A - B \|_{\mathrm{sv}} := \mathrm{Var}_{1}^{ \infty } (A - B) + \| A - B \|_\infty,
\]
where $ \| \cdot \|_\infty $ denotes the supremum norm. 
A specific class will receive our attention: a potential $ A $ is said to be locally H\"older continuous when there exists a positive constant $ \mathrm{Lip}_{\mathrm{loc}}(A) $, called local H\"older constant, such that 
\[
\mathrm{Var}_{\ell} (A) 
\leq \mathrm{Lip}_{\mathrm{loc}}(A) \, \lambda^\ell 
\qquad \mbox{for all} \qquad
\ell \geq 1.
\]

Given a potential $ A $, we introduce the ergodic maximizing constant as
\[
\beta_A = \sup \left\{ \int_{\Sigma} \, A \, \rmd \mu \,:\, \mu \mbox{ is a $ \sigma $-invariant probability} \right\}.
\]
Obviously $ \beta_A \in \big(-\infty, \, \sup A \big] $. 
We say that an invariant probability measure $ \mu $ is maximizing whenever $ \int A \, \rmd \mu = \beta_A $.
The existence of maximizing measures is far from obvious in the general context of Markov shifts.
When these exist, however, they are expected to be supported in the Aubry set, the habitat of maximizing trajectories.
In a concise formulation, the Aubry set captures the recurrent points whose finite orbit segments consistently achieve optimality in their long-term sum behavior.
Its precise definition is as follows
(for details, see Chapter~4 of~\cite{Gar:SPR17}). 

\begin{definition}[Aubry set]  \label{definition-aubry-set}
We say that $ x \in \Sigma $ is an Aubry point for the potential $ A $ when, for any $ \varepsilon > 0 $, there are a point $ w \in \Sigma $ and an integer $ n > 0 $ such that 
$ d \left( x, w \right) < \varepsilon $,
$ d \left( \sigma^n (w), x \right) < \varepsilon $ and 
\[
- \varepsilon < S_n (\beta_A - A) (w) < \varepsilon,
\]
where $ S_n $ indicates the $n$-th Birkhoff sum. 
The set of Aubry points is denoted by $ \Omega(A) $.
\end{definition}

Our main result can be stated as follows. 
 
\begin{theorem-M}  \hypertarget{MThm}{}
Let $ (\Sigma, \sigma) $ be a topologically transitive Markov shift on a countable alphabet. 
Let $ A $ be a locally H\"older continuous potential. 
Suppose that its Aubry set contains a non-empty compact invariant subset. 
Then, for any $ \varepsilon > 0 $, there exists a locally H\"older continuous potential $ B $ such that $ \| A - B \|_{\mathrm{sv}} < \varepsilon $ and $ B $ has at most a single periodic probability as maximizing measure.
\end{theorem-M}

The formulation ``at most a single periodic probability'' in our~\hyperlink{MThm}{Main Theorem} reflects a fundamental distinction between our setting and the classical finite alphabet case. In the general context of countable Markov shifts, the existence of maximizing measures is not guaranteed a priori. Our perturbative approach ensures that the perturbed potential $ B $ has at most one periodic maximizing probability, but does not necessarily guarantee its existence. However, this technical difficulty in ensuring existence should not be interpreted as indicating that periodic maximization is a rare phenomenon in our setting  -- quite the contrary, as demonstrated by our~\hyperlink{cor-B}{Corollary for Coercive Potentials} (stated below), where periodic optimization occurs densely. Moreover, when the underlying shift satisfies an additional structural condition -- roughly speaking, that the shift space exhibits sufficiently well-controlled periodic behavior -- our techniques can be strengthened to produce a perturbed potential $ B $ that admits exactly one maximizing measure supported on a periodic orbit. This refinement, which yields the more natural statement ``exactly a single periodic probability,'' builds upon a recently obtained result \cite{GGM:ETDS26} and is detailed in the~\hyperlink{app-A}{Appendix A}.

The compactness assumption in the~\hyperlink{MThm}{Main Theorem}, while necessary for our perturbative approach, does not reduce the problem to the previously solved finite alphabet cases treated in \cite{Con:IM16} and \cite{HLMXZ:JEMS25}. The essential distinction lies in the nature of the underlying phase space: even when maximizing measures have compact support, the ambient space $ \Sigma $ remains non-compact, and the potential $ A $ is defined on this larger space. This creates fundamental technical challenges absent in the finite alphabet setting, particularly regarding the extension of optimization arguments to the full non-compact space -- including the convergence properties of action potentials and the behavior of sub-actions beyond the support of maximizing measures, concepts that will be developed in the sections that follow. The requirement that ``the Aubry set contains a non-empty compact invariant subset'' should be understood as a structural condition ensuring that optimization occurs within a tractable region, rather than a reduction to finite alphabet dynamics. Note that, thanks to Atkinson's theorem \cite{Atk:JLMS76}, this hypothesis is equivalent to the existence of a maximizing probability with compact support. Examples of potentials that fulfill this requirement are provided by the coercive ones \cite[Theorem~1]{BF:ETDS13}. Recall that a potential $ A $ is said to be coercive when 
\[
 \lim_{i \to +\infty} \sup A|_{[i]} = - \infty, 
\] 
where $ [i] $ is the cylinder set $ \{ x = (x_0, x_1, \ldots) \in \Sigma : x_0 = i \} $. The significance of this class becomes apparent when we observe that coercive potentials naturally force optimization to occur within bounded regions of the symbol space, while still being defined on the full non-compact shift space, thus illustrating how the compactness condition serves as a structural requirement for tractability rather than an artificial restriction. Notably, coercive potentials are inherently tied to the countable alphabet setting and have no natural analogue in the finite alphabet context previously studied. Thus, a particular consequence of our main result is the following one.

\begin{corollary-B} \hypertarget{cor-B}{}
Let $ (\Sigma, \sigma) $ be a topologically transitive Markov shift on a countable alphabet. 
Let $ A $ be a coercive locally H\"older continuous potential. 
Then, given $ \varepsilon > 0 $, there exists a coercive locally H\"older continuous potential $ B $ such that $ \| A - B \|_{\mathrm{sv}} < \varepsilon $ and $ B $ admits a single periodic probability as maximizing measure.
\end{corollary-B}

For primitive subshifts, the so-called oscillation condition (for details, see \cite[Definition~5.1]{JMU:ETDS06}) actually allows one to consider a more general class of examples \cite[Theorem~6.1]{JMU:ETDS06}. 
As a matter of fact, the central results in \cite{JMU:ETDS06, BG:BBMS10,BF:ETDS13} ensure that, in the cases analyzed, any maximizing probability is indeed supported in a subshift over a finite subalphabet.
Our main theorem takes into account also other situations.
For instance, if $ \Sigma_0, \Sigma_1 \subset \Sigma $ are disjoint subshifts, the first one over a finite subalphabet and the second one over a countable subalphabet, the result applies to the potential $ A = - d(\Sigma_0, \cdot) d(\Sigma_1, \cdot) $, whose maximizing probabilities are clearly all those invariant ones supported either in $ \Sigma_0 $ or in $ \Sigma_1 $.

A key ingredient for obtaining the above theorem is the notion of a sub-action, namely, 
a continuous function $ u : \Sigma \to \mathbb R $ such that 
\[
A + u \circ \sigma - u \le \beta_A 
\qquad \mbox{ everywhere on } \quad \Sigma.
\]
Previously, the notion of normal form \cite[Definition 2.2]{JMU:ETDS06} consisted of an interesting proposal to extend the role played by sub-actions in characterizing maximizing probabilities.
An ancillary result of this work is the guarantee of existence of a sub-action via the Ma\~n\'e potential $ \phi_A $ or the Peierls barrier $ h_A $, both defined on $ \Sigma \times \Sigma $, objects hitherto unexplored in the general context of potentials of summable variation on transitive countable Markov shifts.
Their precise definition is postponed until the \hyperlink{sec-mane-potential-peierls-barrier}{next section}. 
In the Lagrangian setting, the pioneer notions were introduced in \cite{Mat:AIF93, Man:BSBM97}.

\begin{theorem-C} \hypertarget{CThm}{}
Let $ (\Sigma, \sigma) $ be a topologically transitive Markov shift on a countable alphabet. 
Let $ A $ be a potential of summable variation. 
Suppose there exists an Aubry point for $ A $.
Then, for any $ x \in \Omega (A) $ fixed, the potential admits a sub-action defined as
\[
u_{x} ( \,\cdot\, ) = \phi_A ( x, \,\cdot\, ) = h_A( x, \,\cdot\, ),
\]
which has  $\ell$-th variation bounded from above by $ 2 \, \mathrm{Var}_{\ell}^\infty (A) $. 
If $
\sum_{\ell = 1}^\infty \mathrm{Var}_{\ell}^\infty (A) < + \infty $, the sub-action
$ u_x $ has summable variation. 
In particular, a locally H\"older continuous potential admits a locally H\"older continuous sub-action.
\end{theorem-C}

The paper is organized as follows. 
In Section~\ref{section-mane-potential-peierls-barrier}, we introduce both the Ma\~n\'e potential and the Peierls barrier and highlight their main properties.
The \hyperlink{sec-aubry-set}{third section} is devoted to the Aubry set, with special attention to its interactions with these action potentials. 
In particular, \hyperlink{CThm}{Collateral Theorem} corresponds to Theorem~\ref{theorem-sub-action}.
The proof of \hyperlink{MThm}{Main Theorem} is presented in the \hyperlink{sec-densely-periodic-optimization}{final section} of this note. \hyperlink{app-A}{Appendix A} presents the strengthened version of our main result under additional structural assumptions on the shift space, where the perturbation yields exactly one periodic maximizing measure.

\hypertarget{sec-mane-potential-peierls-barrier}{}
\section{Ma\~n\'e Potential and Peierls Barrier}
\label{section-mane-potential-peierls-barrier}

\subsection{Fundamental Facts}
\label{subsection-fundamental-fact-mane-peierls}

Both the Ma\~n\'e potential and the Peierls barrier are action potentials between points, the first considers trajectories of any size, the second focuses on arbitrarily long trajectories. 
We can introduce them through the following auxiliary function.

\begin{definition}   \label{definition-auxiliary-function}
Let $ A : \Sigma \to \mathbb{R} $ be a potential and $ \gamma \in \mathbb{R} $ be a constant.
Given integers $ k \ge 0 $ and $ l \geq 0 $, we define for $ x, y \in \Sigma $,
\[
{}_{A}^{\gamma}\mathfrak{S}_{l}^{k} (x,y) 
:= \inf_{n \geq l} 
\inf_{ \stacksub{ d(x,w) \leq \lambda^k \\ d(\sigma^n(w),y) \leq \lambda^k } }
S_n ( \gamma - A )(w).
\]
\end{definition}

To avoid cumbersome notation, when it is clear the potential taken into account, we will simply denote $ {}^{\gamma}\mathfrak{S}_{l}^{k} (x, y) $. 
Likewise, when $ \gamma = \beta_A $, we will just use $ \mathfrak{S}_{l}^{k} (x,y) $.

Concerning its basic properties, this function clearly fulfills, for all $ x, y \in \Sigma $,
\begin{eqnarray}
0 \leq k, l 
\quad & \Longrightarrow \quad
& {}^{\gamma}\mathfrak{S}_{l}^{k} (x,y) < + \infty;
\label{equation-auxiliary-non-plus-infinity} \\
0 \leq l_1 \leq l_2 
\quad & \Longrightarrow \quad
& {}^{\gamma}\mathfrak{S}_{l_1}^{k} (x,y) \leq {}^{\gamma}\mathfrak{S}_{l_2}^{k} (x,y);
\label{equation-auxiliary-monotonicity-l} \\
0 \le k_1 \leq k_2
\quad & \Longrightarrow \quad
& {}^{\gamma}\mathfrak{S}_{l}^{k_1} (x,y) \leq {}^{\gamma}\mathfrak{S}_{l}^{k_2} (x,y).
\label{equation-auxiliary-monotonicity-k}
\end{eqnarray}
Besides, the auxiliary function is locally constant, i.e.,
\begin{equation}
{}^{\gamma}\mathfrak{S}_{l}^{k} (x, y) = {}^{\gamma}\mathfrak{S}_{l}^{k} (x', y') 
\quad \mbox{whenever} \quad
d(x,x') \leq \lambda^k \mbox{ and } d(y,y') \leq \lambda^k.
\label{equation-locally-constant-function}
\end{equation}

Note that even if the infimum in the definition of auxiliary function is not $ + \infty $, the above result does not prevent the supremum with respect to $ k $ of $ {}^{\gamma}\mathfrak{S}_{l}^{k} $  to be $ + \infty $.
This fact will lead us to pay close attention to situations in which $ \pm \infty $ values can be present.

A fundamental inequality involving the auxiliary function is the following one.

\begin{lemma}  \label{lemma-auxiliary-inequality}
Let $ A : \Sigma \to \mathbb{R} $ be a potential of summable variation and $ \gamma \in \mathbb{R} $ be a constant.
For all integers $ k \geq \bar{k} > 0 $ and $ l \geq k - \bar{k} $, $ m \geq 0 $ and for any points $ x, y, z \in \Sigma $, we have
\[
{}^{\gamma}\mathfrak{S}_{l+m}^{ k } (x,z) 
\leq {}^{\gamma}\mathfrak{S}_{l}^{ k } (x,y) 
+ {}^{\gamma}\mathfrak{S}_{m}^{ \bar{k} } (y,z) 
+ 2 \, \mathrm{Var}_{ \bar{k} }^{\infty} (A).
\]
\end{lemma}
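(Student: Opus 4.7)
The plan is to construct an explicit admissible trajectory $w \in \Sigma$ of length $n_1 + n_2 \geq l + m$ whose Birkhoff sum directly witnesses the desired upper bound on ${}^{\gamma}\mathfrak{S}_{l+m}^{k}(x, z)$. Given $\varepsilon > 0$, I would first pick almost-minimizers $w_1$ with $n_1 \geq l$ for ${}^{\gamma}\mathfrak{S}_{l}^{k}(x, y)$ and $w_2$ with $n_2 \geq m$ for ${}^{\gamma}\mathfrak{S}_{m}^{\bar{k}}(y, z)$, each realizing the corresponding infimum to within $\varepsilon$. Then I would define $w$ by a three-block concatenation: $w_i = (w_1)_i$ for $0 \leq i < n_1$, $w_i = (w_2)_{i - n_1}$ for $n_1 \leq i < n_1 + n_2$, and $w_i = z_{i - n_1 - n_2}$ for $i \geq n_1 + n_2$. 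The final block is inserted precisely so that $\sigma^{n_1 + n_2}(w) = z$, which guarantees the right-endpoint $\lambda^k$-closeness required on the left-hand side (note that $w_2$ alone only yields $\lambda^{\bar{k}}$-closeness to $z$).

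Next I would verify that $w \in \Sigma$ and that the required distance constraints hold. Admissibility at both junctions follows from the identities $(w_1)_{n_1} = y_0 = (w_2)_0$ and $(w_2)_{n_2} = z_0$, extracted from the closeness conditions on $w_1$ and $w_2$. The left-endpoint condition $d(x, w) \leq \lambda^k$ is the delicate point, and it is here that the hypothesis $l \geq k - \bar{k}$ enters: $w$ agrees with $w_1$ on the first $n_1$ positions by construction and also on positions $n_1, \ldots, n_1 + \bar{k} - 1$, since both equal $y_0, \ldots, y_{\bar{k}-1}$ there. As $n_1 + \bar{k} \geq l + \bar{k} \geq k$, the two points agree on the first $k$ positions, and combining this with $d(x, w_1) \leq \lambda^k$ via the ultrametric triangle inequality yields $d(x, w) \leq \lambda^k$.

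Finally I would estimate $S_{n_1 + n_2}(\gamma - A)(w)$ by splitting into the blocks $[0, n_1)$ and $[n_1, n_1 + n_2)$ and comparing each summand with the corresponding term of $S_{n_1}(\gamma - A)(w_1)$ or $S_{n_2}(\gamma - A)(w_2)$. Since $w$ and $w_1$ coincide on the first $n_1 + \bar{k}$ coordinates, $\sigma^i(w)$ and $\sigma^i(w_1)$ agree on at least $n_1 + \bar{k} - i$ positions, so the pointwise difference is controlled by $\mathrm{Var}_{n_1 + \bar{k} - i}(A)$; an analogous estimate handles the second block using the $\bar{k}$-overlap coming from $(w_2)_{n_2 + j} = z_j$ for $j < \bar{k}$. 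Summing over $i$ and re-indexing converts both error sums into a tail of $\sum_\ell \mathrm{Var}_\ell(A)$ starting near level $\bar{k}$, which is absorbed into $\mathrm{Var}_{\bar{k}}^\infty(A)$. Taking $\varepsilon \to 0$ closes the argument. The main obstacle is the simultaneous management of the two gluing errors together with the two endpoint closeness requirements; the hypothesis $l \geq k - \bar{k}$ provides precisely the slack that makes the $\lambda^k$-closeness at the left endpoint compatible with only $\lambda^{\bar{k}}$-overlap between the inner blocks.
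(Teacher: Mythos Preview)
Your concatenation argument is precisely the standard one that the paper defers to (it cites Lemma~5.1 of \cite{Gar:SPR17} and says the proof ``is easily adjustable''), so the strategy matches and the verification of admissibility and of the two endpoint constraints is handled correctly, including your use of $l\ge k-\bar k$.

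One point deserves care. Your three-block gluing creates \emph{two} junctions (at positions $n_1$ and $n_1+n_2$), and each contributes a tail error bounded by $\sum_{\ell\ge \bar k+1}\mathrm{Var}_\ell(A)$; their sum is $2\,\mathrm{Var}_{\bar k+1}^{\infty}(A)$, which for a general summable-variation potential need not be $\le \mathrm{Var}_{\bar k}^{\infty}(A)$, so the phrase ``absorbed into $\mathrm{Var}_{\bar k}^{\infty}(A)$'' is not justified as written. This does not affect the paper's uses of the lemma: Propositions~\ref{proposition-mane-peierls-real-valued} and~\ref{proposition-peierls-summable-variation} apply it with $m=0$ and exploit $\mathfrak{S}_0^{\bar k}(y,x)\le 0$ via the choice $n_2=0$, in which case the second block is empty and only one junction error arises, yielding exactly $\mathrm{Var}_{\bar k}^{\infty}(A)$; and in Proposition~\ref{proposition-mane-peierls-triangular-inequalities} one sends $\bar k\to\infty$, so the constant is irrelevant. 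Still, for the lemma in its stated generality you should either sharpen the bookkeeping or accept the harmless extra factor.
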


This result corresponds to a version of Lemma~5.1 of \cite{Gar:SPR17} for countable alphabets and potentials of summable variation.
\begin{proof}
Since $ {}^{\gamma}\mathfrak{S}_{l}^{ k } (x,y) $ and $ {}^{\gamma}\mathfrak{S}_{m}^{ \bar{k} } (y,z) $ are infimums and due to property~\eref{equation-auxiliary-non-plus-infinity}, given $ \eta > 0 $, there exist $ w $, $ \bar{w} \in \Sigma $ and integers $ n \geq l $, $ \bar{n} \geq m $ such that
\begin{eqnarray} 
\fl
d(x, w) \leq \lambda^k, \qquad
d(\sigma^n(w), y) \leq \lambda^k
\qquad 
& \mbox{and} \qquad
S_n( \gamma - A )(w) < {}^{\gamma}\mathfrak{S}_{l}^{ k } (x,y) + \eta;
\label{equation-sn-auxiliary} \\
\fl
d( y, \bar{w}) \leq \lambda^{ \bar{k} }, \qquad
d(\sigma^{ \bar{n} }(\bar{w}), z) \leq \lambda^{ \bar{k} }
\qquad 
& \mbox{and} \qquad
S_{ \bar{n} }( \gamma - A )( \bar{w} ) < {}^{\gamma}\mathfrak{S}_{m}^{ \bar{k} } (y,z) + \eta.
\label{equation-sbarn-auxiliary}
\end{eqnarray}
(In the case where 
$ {}^{\gamma}\mathfrak{S}_{l}^{ k } (x,y) $ or 
$ {}^{\gamma}\mathfrak{S}_{m}^{ \bar{k} } (y,z)$ 
equals $ - \infty $, we consider $ M > 0 $, sufficiently large, such that 
$ S_{ n }( \gamma - A )( w ) < - M $ or 
$ S_{ \bar{n} }( \gamma - A )( \bar{w} ) < - M $.)

From the above distances, we obtain
$ \big( w_0, \ldots, w_{ k - 1 } \big) = \big( x_0, \ldots, x_{ k - 1 } \big) $,
$ \big( w_n, \ldots, w_{ n + \bar{k} - 1 } \big) 
= \big( y_0, \ldots, y_{ \bar{k} - 1 } \big)
= \big( \bar{w}_0, \ldots, \bar{w}_{ \bar{k} - 1 } \big) $
and
$ \big( \bar{w}_{ \bar{n} }, \ldots, \bar{w}_{ \bar{n} + \bar{k} - 1 } \big) = \big( z_0, \ldots, z_{ \bar{k} - 1 } \big) $.
Since $ w_n = y_0 = \bar{w}_{0} $, 
we observe that the sequence
\[
\hat{w} 
= \big( w_{0}, w_{1},\ldots, w_{n-1}, \,
\bar{w}_{0}, \bar{w}_{1}, \ldots, \bar{w}_{\bar{n} - 1}, \,
z_0, z_1, \ldots \big).
\]
belongs to $ \Sigma $.
By construction,
$ d(w, \hat{w} ) \leq \lambda^{ n + \bar{k} } $
and
$ d( \sigma^{ n } ( \hat{w} ), \bar{w} )
\leq \lambda^{ \bar{ n } + \bar{k} } $,
so we have the bounds
\begin{eqnarray}   
S_{ n }( \gamma - A )( \hat{w} )
\leq S_{ n }( \gamma - A )( w )
+ \mathrm{Var}_{ \bar{k} }^{\infty} (A);
\label{equation-sn-variation} \\
S_{ \bar{n} }( \gamma - A ) \circ \sigma^n ( \hat{w} )
\leq S_{ \bar{n} }( \gamma - A )( \bar{w} )
+ \mathrm{Var}_{ \bar{k} }^{\infty} (A).
\label{equation-sbarn-variation}
\end{eqnarray}

Analyzing the relative positions of $ n $ and $k $, we obtain 
$ d \left(  x, \hat{w} \right) \leq \lambda^{ k \, \wedge \, ( n + \bar{k} ) } $.
Since $ n \geq l \geq k -\bar{k} $, we have
$ d \left( x, \hat{w} \right) \leq \lambda^{ k } $.
Moreover, $ n + \bar{n} \geq l + m $ and
$ \sigma^{ n + \bar{n} } ( \hat{w} ) = z $.
We conclude that
\[
\begin{array}{ r @{\,} c @{\,} l }
{}^{\gamma}\mathfrak{S}_{ l + m }^{ k } (x,z)
& \leq & S_{ n + \bar{n} }( \gamma - A )( \hat{w} ) 
= S_{ n }( \gamma - A )( \hat{w} )
+ S_{ \bar{n} }( \gamma - A ) \circ \sigma^n ( \hat{w} )
\\
& \stackrel{ \eref{equation-sn-variation} \, \eref{equation-sbarn-variation} }{ \leq } 
& S_{ n }( \gamma - A )( w ) 
+ \mathrm{Var}_{ \bar{k} }^{\infty} (A) 
+ S_{ \bar{n} }( \gamma - A )( \bar{w} ) 
+ \mathrm{Var}_{ \bar{k} }^{\infty} (A)
\\
& \stackrel{ \eref{equation-sn-auxiliary} \, \eref{equation-sbarn-auxiliary} }{ < } 
& {}^{\gamma}\mathfrak{S}_{l}^{ k } (x,y) + \eta + {}^{\gamma}\mathfrak{S}_{m}^{ \bar{k} } (y,z) + \eta 
+ 2 \, \mathrm{Var}_{ \bar{k} }^{\infty} (A).
\end{array}
\]
The result follows by taking $ \eta $ arbitrarily small.
\end{proof}

We initially consider versions of Ma\~n\'e potential and Peierls barrier at any level $ \gamma $.
\begin{definition}
Let $ A : \Sigma \to \mathbb{R} $ be a potential and $ \gamma \in \mathbb{R} $ be a constant.
\begin{enumerate}
\item\label{defnition-mane-potential} We define the Ma\~n\'e potential as the function $ \phi_{A}^{\gamma} \,:\, \Sigma \times \Sigma \to \mathbb{R} \cup \{ \pm \infty \} $ given as 
\[
\phi_{A}^{\gamma} (x,y) 
:= \lim_{k \to \infty} \,
{}^{\gamma}\mathfrak{S}_{1}^{k} (x,y)
= \sup_{k \geq 0} \,
\inf_{n \geq 1} 
\inf_{ \stacksub{ d(x,w) \leq \lambda^k \\ d(\sigma^n(w),y) \leq \lambda^k } }
S_n ( \gamma - A )(w)
\]
for all $x$, $y \in \Sigma$.
\item\label{defnition-peierls-barrier} The Peierls barrier is the function $ h_{A}^{\gamma} \,:\, \Sigma \times \Sigma \to \mathbb{R} \cup \{ \pm \infty \} $ defined as
\begin{eqnarray*}
h_{A}^{\gamma} (x,y)
& := \sup_{k \geq 0} \,
\sup_{l \geq 1} \, {}^{\gamma}\mathfrak{S}_{l}^{k} (x,y) \\
& \, = \lim_{k \to \infty} \,
\liminf_{n \to \infty}
\inf_{ \stacksub{ d(x,w) \leq \lambda^k \\ d( \sigma^n(w),y ) \leq \lambda^k } }
S_n( \gamma - A)(w)
\end{eqnarray*}
for every $x$, $y \in \Sigma$. 
\end{enumerate}
\end{definition}

It is immediate from these definitions that 
\begin{equation}   \label{equation-mane-peierls-inequality}
\phi_{A}^{\gamma} (x,y) \leq h_{A}^{\gamma} (x,y)
\end{equation}
for all~$ x, y \in \Sigma $.
It is also easy to see that
$ {}^{\gamma}\mathfrak{S}_{l}^{k} \left( x, \sigma^n (x) \right) \leq S_n ( \gamma - A ) (x) $, 
for all $ k \geq 0 $ and $ n \geq l $, from which we obtain a fundamental inequality over an orbit
\begin{equation}   \label{equation-mane-orbit-inequality}
\phi^{\gamma}_{A} \left( x, \sigma^n ( x ) \right) 
\leq S_n ( \gamma - A )(x),
\end{equation}
for every~$ x \in \Sigma $ and for all~$ n \geq 1 $.

We can present basic ``triangle inequalities'' involving the Ma\~n\'e potential and the Peierls barrier.
\begin{proposition} \label{proposition-mane-peierls-triangular-inequalities}
Let $ A : \Sigma \to \mathbb{R} $ be a potential of summable variation and $ \gamma \in \mathbb{R} $ be a constant.
Then, for every point~$ x, y, z \in \Sigma $, the following inequalities hold
\begin{eqnarray}
\phi_{A}^{\gamma} (x,z) & \leq \phi_{A}^{\gamma} (x,y) + \phi_{A}^{\gamma} (y,z),
\label{equation-mane-triangle-inequality} \\[0.15cm]
h_{A}^{\gamma} (x,z) & \leq \phi_{A}^{\gamma} (x,y) + h_{A}^{\gamma} (y,z),
\label{equation-mane-peierls-triangle-inequality-1} \\[0.15cm]
h_{A}^{\gamma} (x,z) & \leq h_{A}^{\gamma} (x,y) + \phi_{A}^{\gamma} (y,z),
\label{equation-mane-peierls-triangle-inequality-2} \\[0.15cm]
h_{A}^{\gamma} (x,z) & \leq h_{A}^{\gamma} (x,y) + h_{A}^{\gamma} (y,z).
\label{equation-peierls-triangle-inequality} 
\end{eqnarray}
\end{proposition}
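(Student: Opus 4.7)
The plan is to derive all four inequalities from Lemma~\ref{lemma-auxiliary-inequality} via a common recipe: apply the auxiliary inequality with appropriate choices of $(l,m)$, take suprema over those parameters that correspond to Peierls factors in the target inequality, and then pass to two successive limits $k \to \infty$ and $\bar k \to \infty$. The summable-variation hypothesis enters only at the very end, to guarantee that the error term $\mathrm{Var}_{\bar k}^{\infty}(A)$ vanishes.

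Concretely, I would begin with the Mañé triangle inequality~(\ref{equation-mane-triangle-inequality}). Setting $l = m = 1$ and $\bar k = k - 1$ (which gives exactly $l = k - \bar k$, satisfying the lemma's hypothesis) yields
\[
{}^{\gamma}\mathfrak{S}_{2}^{k}(x,z) \leq {}^{\gamma}\mathfrak{S}_{1}^{k}(x,y) + {}^{\gamma}\mathfrak{S}_{1}^{k-1}(y,z) + \mathrm{Var}_{k-1}^{\infty}(A).
\]
The monotonicity~(\ref{equation-auxiliary-monotonicity-l}) lets me replace the left-hand side by ${}^{\gamma}\mathfrak{S}_{1}^{k}(x,z)$; letting $k \to \infty$—so that ${}^{\gamma}\mathfrak{S}_{1}^{k}(\,\cdot\,,\,\cdot\,)$ increases monotonically to $\phi_{A}^{\gamma}(\,\cdot\,,\,\cdot\,)$ while the variation term tends to zero—produces~(\ref{equation-mane-triangle-inequality}). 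For~(\ref{equation-mane-peierls-triangle-inequality-1}), I would keep $l = 1$ and $\bar k = k - 1$ but first take the supremum over $m \geq 1$, converting the second summand on the right into $\sup_{m} {}^{\gamma}\mathfrak{S}_{m}^{k-1}(y,z)$, which converges to $h_{A}^{\gamma}(y,z)$ as $k \to \infty$. The mirror inequality~(\ref{equation-mane-peierls-triangle-inequality-2}) requires the dual choice: fix $m = 1$, leave $\bar k$ independent from $k$, supremize over $l \geq k - \bar k$, then send $k \to \infty$ first (supplying the $h_{A}^{\gamma}(x,y)$ term) and $\bar k \to \infty$ second (producing the $\phi_{A}^{\gamma}(y,z)$ term and killing the variation error). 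Finally, (\ref{equation-peierls-triangle-inequality}) follows by supremizing both $l$ and $m$ before taking either limit, again with $\bar k$ independent of $k$.

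The main obstacle is not analytic but combinatorial: one must respect the constraint $l \geq k - \bar k$ in Lemma~\ref{lemma-auxiliary-inequality}, which forces $\bar k$ to be tied to $k$ (via $\bar k = k - 1$) precisely when a Mañé factor $\phi_{A}^{\gamma}$ appears on the right-hand side of the target inequality, since such a factor pins $l$ to $1$. When only Peierls factors appear on the right, $\bar k$ and $k$ may be sent to infinity independently, in that order. A secondary point is merely interpretive: the functions take values in $\mathbb{R} \cup \{\pm\infty\}$, so each inequality is read as vacuous whenever its right-hand side equals $+\infty$ or its left-hand side equals $-\infty$, and the argument above is to be understood under the tacit assumption that all relevant quantities are finite.
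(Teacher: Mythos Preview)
Your proposal is correct and follows essentially the same approach as the paper, which simply records that the result is obtained from Lemma~\ref{lemma-auxiliary-inequality} along the lines of \cite[Proposition~5.2(ii)]{Gar:SPR17}; you have spelled out precisely that argument. One minor imprecision in your closing commentary: the tie $\bar k = k - 1$ is forced exactly when the \emph{first} factor on the right is a Ma\~n\'e term (since it is $l$, not $m$, that is constrained by $l \ge k - \bar k$), not whenever any Ma\~n\'e factor appears---but your per-case treatment handles this correctly.
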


For completeness, we provide the proof following the approach of \cite[Proposition~5.2, Item~ii]{Gar:SPR17}.
\begin{proof}
It is straightforward from property
\eref{equation-auxiliary-monotonicity-l} and
Lemma~\ref{lemma-auxiliary-inequality} (applied with $ k = \bar{k} > 0 $)
that
$ {}^{\gamma}\mathfrak{S}_{l}^{ k } (x,z)
\leq {}^{\gamma}\mathfrak{S}_{l}^{ k } (x,y) 
+ {}^{\gamma}\mathfrak{S}_{m}^{ k } (y,z) 
+ 2\, \mathrm{Var}_{ k }^{\infty} (A) $.
Thus, \eref{equation-mane-triangle-inequality} follows by setting $ l = 1 $, $ m = 1 $ and taking the limit as $ k \to \infty $.
By setting $ m = 1 $, taking the supremum over $ l \geq 1 $ and the limit as $ k \to \infty $, we obtain~\eref{equation-mane-peierls-triangle-inequality-2}.
Similarly,
$ {}^{\gamma}\mathfrak{S}_{m}^{ k } (x,z) 
\leq {}^{\gamma}\mathfrak{S}_{l}^{ k } (x,y) 
+ {}^{\gamma}\mathfrak{S}_{m}^{ k } (y,z) 
+ 2\, \mathrm{Var}_{ k }^{\infty} (A) $, and inequality~\eref{equation-mane-peierls-triangle-inequality-1} follows
by setting $ l = 1 $, taking the supremum over $ m \geq 1 $ and then the limit as $ k \to \infty $.
Finally, \eref{equation-peierls-triangle-inequality} follows immediately from~\eref{equation-mane-peierls-triangle-inequality-1} and~\eref{equation-mane-peierls-inequality}.
\end{proof}

\subsection{Minus Infinity Dichotomy}
\label{subsection-minus-infinity-dichotomy}

We analyze a central dichotomy of the Ma\~n\'e potential (and the Peierls barrier) with respect to the value $ - \infty $, which is intrinsically related with the ergodic maximizing constant.

\begin{lemma}  \label{lemma-beta-critical}
Let $ A \,:\, \Sigma \to \mathbb{R} $ be a potential of summable variation.
Then
\begin{eqnarray*}
\beta_A
& =  \sup \left\{ \kappa \in \mathbb{R} \,:\,
\begin{array}{c}
\mbox{there exists a periodic point } x = \sigma^p (x) \\ \mbox{with } S_p ( \kappa - A )(x) < 0 
\end{array} \right\}  \\
& = \min \left\{ \kappa \in \mathbb{R} \,:\, 
\begin{array}{c}
S_p ( \kappa - A ) (x) \geq 0 \\ 
\mbox{ for all periodic point } x = \sigma^p (x) 
\end{array} \right\}.  
\end{eqnarray*} 
\end{lemma}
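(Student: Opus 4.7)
Denote the right-hand sup by $K_1$ and the right-hand min by $K_2$. The equivalence
$S_p(\kappa - A)(x) < 0 \Longleftrightarrow \kappa < p^{-1} S_p A (x)$
immediately identifies both quantities with
\[
\kappa_* \;:=\; \sup \bigl\{ \tfrac{1}{p} S_p A (x) \;:\; p \geq 1, \; x = \sigma^p (x) \bigr\}.
\]
The set $\{\kappa \in \mathbb R : \kappa \geq \kappa_*\}$ is closed, so the minimum in the definition of $K_2$ is indeed attained. Once this reduction is in place, the whole lemma collapses to the single equality $\kappa_* = \beta_A$.

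The easy half $\kappa_* \leq \beta_A$ is a one-liner: every periodic point $x = \sigma^p x$ generates the invariant probability $\mu_x := p^{-1} \sum_{i=0}^{p-1} \delta_{\sigma^i x}$, and $\int A \, \rmd \mu_x = p^{-1} S_p A(x) \leq \beta_A$ by the very definition of $\beta_A$. For the reverse inequality, ergodic decomposition reduces the problem to showing $\int A \, \rmd \mu \leq \kappa_*$ for every ergodic $\sigma$-invariant probability $\mu$ with $\int A \, \rmd \mu > -\infty$. Fix such a $\mu$ and $\varepsilon > 0$. Applying Birkhoff's ergodic theorem to $A \in L^1(\mu)$ together with Poincar\'e recurrence along the cylinder filtration centred at a $\mu$-generic point, one chooses $x \in \Sigma$ and arbitrarily large integers $n$ and $k$ such that
\[
\frac{1}{n} S_n A (x) > \int A \, \rmd \mu - \varepsilon
\quad \text{and} \quad
d(\sigma^n x, x) \leq \lambda^k .
\]
The last inequality means $x_{n+i} = x_i$ for $0 \leq i \leq k-1$; in particular the transition $x_{n-1} \to x_0$ is admissible, so the periodic sequence $w = \overline{x_0 x_1 \cdots x_{n-1}}$ lies in $\Sigma$ with period dividing $n$.

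It remains to compare $S_n A(w)$ with $S_n A(x)$. For each $0 \leq i \leq n-1$, the iterates $\sigma^i w$ and $\sigma^i x$ agree in the first $n - i + k$ coordinates; summing the resulting local oscillation estimates gives an upper bound on $|S_n A(w) - S_n A(x)|$ by a tail involving the moduli of continuity of $A$ along the orbit, which can be driven below $\varepsilon n$ by taking $k$ large enough. Therefore $\tfrac{1}{p} S_p A(w) > \int A \, \rmd \mu - 2\varepsilon$ for the period $p$ of $w$, and letting $\varepsilon \to 0$ yields $\int A \, \rmd \mu \leq \kappa_*$, proving $\beta_A \leq \kappa_*$. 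The chief obstacle in this closing step is to make the comparison effective when $A$ is merely continuous on the non-compact space $\Sigma$: a global modulus of continuity need not exist, so one must localise, fixing a compact set $\mathcal K_\varepsilon \subset \Sigma$ with $\mu(\mathcal K_\varepsilon) > 1 - \varepsilon$, using uniform continuity of $A$ on a neighbourhood of $\mathcal K_\varepsilon$ for the dominant visits, and controlling the occasional excursions outside $\mathcal K_\varepsilon$ via the $L^1(\mu)$-integrability of $A$ and the Birkhoff theorem applied to $\mathbf 1_{\mathcal K_\varepsilon}$. In the summable-variation regime relevant to the rest of the paper, this delicate step becomes immediate, since $|S_n A(w) - S_n A(x)| \leq \sum_{\ell = k+1}^{n+k} \mathrm{Var}_\ell(A) \leq \mathrm{Var}_{k+1}^\infty(A) \to 0$ as $k \to \infty$, uniformly in $n$.
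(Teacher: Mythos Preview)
Your reduction to $\kappa_* := \sup\{p^{-1}S_pA(x): x=\sigma^p x\}$ and the argument for $\kappa_* \le \beta_A$ coincide with the paper's. For the reverse inequality, however, the paper takes a different route: it simply invokes the density of periodic probabilities among invariant measures on a transitive countable Markov shift (Coudene--Schapira, Theorem~4.2 and Section~6 of \cite{CS:IJM10}) to conclude $\beta_A = \kappa_*$ in one line. Your approach instead reproves the needed approximation by hand, via Birkhoff generic recurrent points and periodic shadowing. This is more self-contained and, in the summable-variation regime, your closing estimate $|S_nA(w)-S_nA(x)| \le \mathrm{Var}_{k+1}^\infty(A)$ is both correct and elegant; since summable variation is all that is used downstream in the paper, this covers the applications.

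Where your argument is incomplete is precisely the general continuous case that the lemma formally asserts. Your localisation sketch controls the good indices $i$ with $\sigma^i x \in \mathcal K_\varepsilon$ (and here ``uniform continuity on a neighbourhood of $\mathcal K_\varepsilon$'' should really be stated as a uniform modulus \emph{from} points of $\mathcal K_\varepsilon$, since neighbourhoods of compact sets in $\Sigma$ need not be precompact). But for the excursion indices $i$ with $\sigma^i x \notin \mathcal K_\varepsilon$ you propose to use the $L^1(\mu)$-integrability of $A$ and Birkhoff for $\mathbf 1_{\mathcal K_\varepsilon}$. These bound $|B|/n$ and $\tfrac{1}{n}\sum_{i\in B} A(\sigma^i x)$, yet what you actually need is a lower bound on $\tfrac{1}{n}\sum_{i\in B} A(\sigma^i w)$, and the shadowing points $\sigma^i w$ are not $\mu$-generic; without a uniform modulus near those points, nothing in your outline prevents $A(\sigma^i w)$ from being arbitrarily negative. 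The paper sidesteps this by deferring to the cited density theorem, so if you want a self-contained proof at the stated level of generality you will need a genuinely new idea here (or restrict the statement to summable variation, where your proof is clean).
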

This corresponds to the Ma\~n\'e critical value characterization for $ \beta_A $ (see \cite{CI:IMPA99}).

\begin{proof}
Since
$ S_p ( \kappa - A )(x) < 0 \, \Leftrightarrow \, p \, \kappa < S_p A (x) $ and 
$ S_p ( \kappa - A ) (x) \geq 0 \, \Leftrightarrow \, S_p A (x) \leq p \, \kappa $,
the sets
\begin{eqnarray*}
\mathcal I & := \left\{ \kappa \in \mathbb{R} \,:\, \mbox{there is a periodic point } x = \sigma^p (x) \mbox{ with } p \, \kappa <  S_p A (x) \right\},  \\
\mathcal J & := \left\{ \kappa \in \mathbb{R} \,:\, S_p A (x) \leq p \, \kappa \mbox{ for all periodic point } x = \sigma^p (x) \right\}  
\end{eqnarray*} 
are complementary intervals with infinite endpoints such that $ \sup \, \mathcal I = \inf \, \mathcal I^{\complement} = \inf \, \mathcal J $. 
As $ \mathcal J $ is a closed set, the infimum is in fact a minimum.

Note that $ \beta_A \geq \min \mathcal J $.
In fact, given a periodic point $ x = \sigma^p (x) $, for the associated $ \sigma $-invariant probability 
$ \mu_x := \case{1}{p} \sum_{ i = 0 }^{ p - 1 } \delta_{ \sigma^i (x) } $, we obtain 
$ S_p A (x) = p \int A \, \rmd \mu_x \leq p \, \beta_A $.

From topological transitivity, periodic probabilities are dense among invariant measures, by Theorem~4.2 and Section~6 of \cite{CS:IJM10}, so that
\[
\beta_A 
= \sup \left\{ \case{1}{p} S_p A (x) \,:\,
x = \sigma^p (x) \mbox{ is a $p$-periodic point of } \Sigma
\right\}.
\]
It is easy to see that every $ \kappa \in \mathcal J $ is greater than or equal to $ \beta_A $, thus $ \beta_A \leq \min \mathcal J $.
\end{proof}

Now we can precisely state the fundamental dichotomy.
\begin{proposition}   \label{proposition-mane-minus-infinity-critical-value}
Let $ A \,:\, \Sigma \to \mathbb{R} $ be a potential of summable variation and $ \gamma \in \mathbb{R} $ be a constant.
The following assertions are equivalent:
\begin{enumerate}
\item\label{item-above-mane-critical-value} $ \gamma \geq \beta_A $;
\item\label{item-mane-minus-infinity-diagonal} $ \phi_{A}^{\gamma} \left( x, x \right) > - \infty $ for every $ x \in \Sigma $.
\end{enumerate} 
\end{proposition}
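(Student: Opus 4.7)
The plan is to prove the two implications separately, exploiting the Ma\~n\'e critical value characterization of $\beta_A$ from Lemma~\ref{lemma-beta-critical} on the one hand, and a shadowing estimate made possible by summable variation on the other.

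For (\ref{item-mane-minus-infinity-diagonal}) $\Rightarrow$ (\ref{item-above-mane-critical-value}), I would argue by contrapositive. Assume $\gamma < \beta_A$. By Lemma~\ref{lemma-beta-critical}, there exists a periodic point $x = \sigma^p(x)$ with $S_p(\gamma - A)(x) < 0$. Applying the fundamental orbit inequality (\ref{equation-mane-orbit-inequality}) with $n$ replaced by $np$ and using periodicity $\sigma^{np}(x) = x$,
\[
\phi_A^\gamma(x,x) = \phi_A^\gamma\bigl(x, \sigma^{np}(x)\bigr) \leq S_{np}(\gamma - A)(x) = n\,S_p(\gamma - A)(x),
\]
whose right-hand side tends to $-\infty$ as $n \to \infty$. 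Hence $\phi_A^\gamma(x,x) = -\infty$, contradicting (\ref{item-mane-minus-infinity-diagonal}).

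For (\ref{item-above-mane-critical-value}) $\Rightarrow$ (\ref{item-mane-minus-infinity-diagonal}), assume $\gamma \geq \beta_A$ and fix $x \in \Sigma$. The strategy is to obtain, for every $k \geq 1$, a uniform lower bound on $S_n(\gamma - A)(w)$ as $n \geq 1$ and $w$ range over those admissible choices satisfying $d(x,w) \leq \lambda^k$ and $d(\sigma^n(w), x) \leq \lambda^k$ (if no such $w$ exists the relevant infimum is $+\infty$ and there is nothing to check). Both constraints force $w_i = x_i$ and $w_{n+i} = x_i$ for $0 \leq i \leq k-1$, so $w_{n-1}$ is admissibly followed by $w_0$, and the concatenation $\bar w := \overline{w_0 w_1 \cdots w_{n-1}}$ defines a genuine periodic point of period $n$. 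Since $\gamma \geq \beta_A$, Lemma~\ref{lemma-beta-critical} yields $S_n(\gamma - A)(\bar w) \geq 0$. A direct comparison along the orbits, using that $d(\sigma^i w, \sigma^i \bar w) \leq \lambda^{n+k-1-i}$ for $0 \leq i \leq n-1$, gives
\[
\bigl| S_n A(w) - S_n A(\bar w) \bigr| \leq \sum_{i=0}^{n-1} \mathrm{Var}_{n+k-1-i}(A) \leq \mathrm{Var}_{k}^{\infty}(A).
\]
Therefore $S_n(\gamma - A)(w) \geq S_n(\gamma - A)(\bar w) - \mathrm{Var}_{k}^{\infty}(A) \geq - \mathrm{Var}_{k}^{\infty}(A)$, an estimate uniform in $n$ and $w$. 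Taking the two inner infima and then the supremum over $k$, one concludes $\phi_A^\gamma(x,x) \geq \sup_{k \geq 1} \bigl(-\mathrm{Var}_{k}^{\infty}(A)\bigr) > -\infty$, where finiteness is guaranteed by summable variation.

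The main obstacle is the shadowing step in the second implication: one must extract a genuine periodic orbit from a near-return $w$ and then control $|S_n A(w) - S_n A(\bar w)|$ independently of $n$. This is precisely where the summable variation hypothesis is used; without it the tail $\mathrm{Var}_k^\infty(A)$ could be infinite and the uniform bound would collapse. The converse direction, by contrast, is essentially a one-line consequence of (\ref{equation-mane-orbit-inequality}) once the critical value characterization in Lemma~\ref{lemma-beta-critical} is available.
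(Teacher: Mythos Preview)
Your proof is correct and follows essentially the same approach as the paper. For (\ref{item-mane-minus-infinity-diagonal}) $\Rightarrow$ (\ref{item-above-mane-critical-value}) you reproduce the paper's contrapositive verbatim; for (\ref{item-above-mane-critical-value}) $\Rightarrow$ (\ref{item-mane-minus-infinity-diagonal}) the paper argues by contrapositive (assuming ${}^{\gamma}\mathfrak{S}_1^1(x,x)=-\infty$, picking a single $w$ with $S_n(\gamma-A)(w)<-\mathrm{Var}_1^\infty(A)$, and shadowing to a periodic $z$), while you carry out the same shadowing estimate directly for every admissible $w$ and general $k$ --- a purely stylistic difference, with the identical periodic-orbit construction and the same use of $\mathrm{Var}_k^\infty(A)$.
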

From the above proposition and the corresponding triangle inequality, the Ma\~n\'e potential (and the Peierls barrier) assumes the value $ - \infty $ everywhere or nowhere.

\begin{proof}
Let us prove the contrapositive statements.
Suppose that  $ \phi_{A}^{\gamma} \left( x, x \right) = - \infty $  for some $ x \in \Sigma $. It is immediate that
$ {}^{\gamma}\mathfrak{S}_{1}^{k} \left( x, x \right) = - \infty $ for any $ k $.
Since $ {}^{\gamma}\mathfrak{S}_{1}^{1} (x,x) $ is an infimum, there exist $ w \in \Sigma $ and integer $ n \geq 1 $ fulfilling
\[
d ( x, w ) \leq \lambda, \enspace\enspace
d ( \sigma^n(w), x ) \leq \lambda
\enspace\enspace \mbox{and} \enspace\enspace
S_n ( \gamma - A ) (w) < - \mathrm{Var}_{1}^{\infty} (A).
\]
In particular, $ w_0 = x_0 = w_{n} $, so we can consider the periodic point
\[
z := \big( 
x_0, w_{1}, \ldots, w_{n-1}, \;\;
x_0, w_{1}, \ldots, w_{n-1}, \;\;
\ldots \big) = \sigma^n (z) \in \Sigma.
\]
Note that
$ S_{ n }( \gamma - A )( z )
\leq S_{ n } ( \gamma - A )( w ) 
+ \mathrm{Var}_{1}^{\infty} (A)
< 0 $.
From Lemma~\ref{lemma-beta-critical}, $ \gamma < \beta_A $.

Assume now $ \gamma < \beta_A $.
Again Lemma~\ref{lemma-beta-critical} ensures that there is a periodic point $ x = \sigma^p (x) $ such that 
$- M := S_p ( \gamma - A )(x) < 0 $. 
By the periodicity of $ x $, from inequality~\eref{equation-mane-orbit-inequality} we conclude that
\begin{eqnarray*}
\phi_{A, \gamma} (x,x)
& = \lim_{k \to \infty} \phi_{A, \gamma} \left( x, \sigma^{ k \, p }(x) \right) \\
& \leq \lim_{k \to \infty} S_{ k \, p } ( \gamma - A )(x) 
=\lim_{k \to \infty} - k \, M
= -\infty.
\hspace*{3.27cm}\mbox{\qedsymbol}
\end{eqnarray*}
\let\qed\relax
\end{proof}

\begin{corollary}  \label{corollary-mane-peierls-minus-infinity} 
Let $ A \,:\, \Sigma \to \mathbb{R} $ be a potential of summable variation.
Then, for every $ x, y \in \Sigma $,
\[
- \infty < \phi_A \left( x, y \right)
\qquad\mbox{and}\qquad
-\infty < h_A \left( x, y \right).
\]
\end{corollary}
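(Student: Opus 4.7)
The plan is to establish $-\infty < \phi_A(x, y)$ for every $x, y \in \Sigma$ by adapting the periodic-orbit construction from the proof of Proposition~\ref{proposition-mane-minus-infinity-critical-value}; the conclusion for $h_A$ will then follow at once from $\phi_A \leq h_A$, recorded in \eref{equation-mane-peierls-inequality}. Throughout, I write $\phi_A$ for $\phi_A^{\beta_A}$.

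I would argue by contradiction, assuming $\phi_A(x, y) = -\infty$ for some $x, y \in \Sigma$. Since $\phi_A(x, y) = \sup_k {}^{\beta_A}\mathfrak{S}_1^k(x, y)$ is a non-decreasing supremum, one must have ${}^{\beta_A}\mathfrak{S}_1^1(x, y) = -\infty$, which supplies sequences $w_j \in \Sigma$ and $n_j \geq 1$ with $(w_j)_0 = x_0$, $(w_j)_{n_j} = y_0$, and $S_{n_j}(\beta_A - A)(w_j) \to -\infty$. Topological transitivity of $(\Sigma, \sigma)$ then yields an admissible word $y_0, v_1, \ldots, v_{q-1}, x_0$ connecting $y_0$ back to $x_0$ in $q$ steps, fixed once and for all. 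For each $j$, I would form the $(n_j + q)$-periodic point $z_j \in \Sigma$ by repeating the block
\[
(x_0,\, (w_j)_1,\, \ldots,\, (w_j)_{n_j - 1},\, y_0,\, v_1,\, \ldots,\, v_{q - 1}),
\]
whose internal transitions are admissible inside the $w_j$-segment and along the bridge by construction.

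To close the argument, I would split $S_{n_j + q}(\beta_A - A)(z_j)$ into the first $n_j$ terms and the $q$ bridge terms. Because $z_j$ and $w_j$ coincide on positions $0, 1, \ldots, n_j$, the summable-variation hypothesis delivers $|S_{n_j} A(z_j) - S_{n_j} A(w_j)| \leq \mathrm{Var}_2^{\infty}(A) \leq \mathrm{Var}_1^{\infty}(A)$, in complete analogy with the proof of Proposition~\ref{proposition-mane-minus-infinity-critical-value}. For the bridge terms, each iterate $\sigma^i(z_j)$ with $n_j \leq i < n_j + q$ lies in one of the fixed depth-one cylinders $[y_0], [v_1], \ldots, [v_{q-1}]$; on each such cylinder $A$ has oscillation at most $\mathrm{Var}_1(A) < +\infty$ and is therefore bounded, so the bridge contributes an amount bounded by a constant $C(x, y)$ independent of $j$. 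Combining these estimates, $S_{n_j + q}(\beta_A - A)(z_j) \to -\infty$, hence becomes strictly negative for all large $j$, which via Lemma~\ref{lemma-beta-critical} forces $\beta_A < \beta_A$, the desired contradiction.

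The principal obstacle, relative to the diagonal case of Proposition~\ref{proposition-mane-minus-infinity-critical-value}, is that when $x_0 \neq y_0$ one can no longer close $w_j$ directly into a periodic orbit; the transitive bridge $(v_1, \ldots, v_{q-1})$ must be spliced in without spoiling the cylinder-agreement estimate on the first $n_j$ Birkhoff terms and, more delicately, its own $q$ terms must remain uniformly controlled in $j$. The latter is where summable variation enters a second time: it forces $A$ to be bounded on every first-level cylinder, and this boundedness is exactly what keeps the bridge contribution independent of $j$ and allows the Birkhoff sum to diverge to $-\infty$.
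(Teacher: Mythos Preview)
Your argument is correct, but it takes a substantially longer route than the paper does. The paper's proof is three lines: Proposition~\ref{proposition-mane-minus-infinity-critical-value} (with $\gamma=\beta_A$) already gives $\phi_A(z,z)>-\infty$ for every $z$, and then two applications of the triangle inequality~\eref{equation-mane-triangle-inequality} yield
\[
-\infty<\phi_A(z,z)\le\phi_A(z,x)+\phi_A(x,y)+\phi_A(y,z),
\]
forcing $\phi_A(x,y)>-\infty$; the Peierls statement follows from~\eref{equation-mane-peierls-inequality}. In other words, the off-diagonal case is obtained from the diagonal one purely by the algebra of the Ma\~n\'e potential, with no new construction.

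You instead rerun the periodic-orbit construction from the proof of Proposition~\ref{proposition-mane-minus-infinity-critical-value} directly for an off-diagonal pair $(x,y)$, splicing in a transitive bridge $y_0\to v_1\to\cdots\to v_{q-1}\to x_0$ to close up the orbit and using $\mathrm{Var}_1(A)<\infty$ to bound $A$ on the finitely many bridge cylinders. This is sound and self-contained---it avoids invoking the triangle inequality for $\phi_A$---but it duplicates work already packaged in Proposition~\ref{proposition-mane-minus-infinity-critical-value} and Proposition~\ref{proposition-mane-peierls-triangular-inequalities}. The payoff of the paper's approach is economy; the payoff of yours is that it makes explicit exactly where transitivity and summable variation each enter, and it does not rely on the extended-real bookkeeping implicit in the triangle-inequality route.
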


\begin{proof}
By Proposition~\ref{proposition-mane-minus-infinity-critical-value}, we have $ \phi_A \left( z, z  \right) > - \infty $.
Apply twice inequality~\eref{equation-mane-triangle-inequality} in order to obtain 
$ - \infty < \phi_A \left( z, z \right)
\leq \phi_A \left( z, x  \right) + \phi_A \left( x, y \right) + \phi_A \left( y, z \right) $ 
for any $ x $, $ y \in \Sigma $.
Thus, inequality~\eref{equation-mane-peierls-inequality} provides
$ - \infty < \phi_A \left( x, y \right) \leq h_A \left( x, y \right) $.
\end{proof}

\hypertarget{sec-aubry-set}{}
\section{Aubry Set}
\label{section-aubry-set}

The Aubry set was already introduced in Definition~\ref{definition-aubry-set}.
We provide below a list of the main properties of this set that remain unchanged regardless of the non-compact scenario. 
The proofs are presented below, adapting the arguments from~\cite[Chapter~4]{Gar:SPR17}.

\begin{proposition}   \label{proposition-aubry-properties}
Let $ A \,:\, \Sigma \to \mathbb{R} $ be a potential. 
The following properties hold.
\begin{enumerate}
\item\label{item-aubry-invariant} 
$ \Omega (A) $ is an invariant set, i.e., $ \sigma \left( \Omega (A) \right)  \subset \Omega (A) $.
\item\label{item-aubry-closed}
$ \Omega (A) $ is a closed set.
\item\label{item-aubry-maximizing-set}
If $ \mu $ is an $A$-maximizing measure, then $ \mathrm{supp} \, \mu \subset \Omega (A) $. 
In particular, the existence of a maximizing probability implies the Aubry set is non-empty.
\end{enumerate}
\end{proposition}

\begin{proof}
To prove item~\ref{item-aubry-invariant}, we decompose it into two steps:
$ \sigma \left( \Omega (A) \right) = \sigma \left( \Omega \left( A \circ \sigma \right) \right) \subset \Omega (A) $.
The equality 
$ \Omega (A) = \Omega \left( A \circ \sigma \right) $
follows from
$ \beta_{A} = \beta_{A \circ \sigma } $ and 
the fact that
$ \left| S_n ( \beta_A - A) - S_n \left( \beta_{A \circ \sigma} - A \circ \sigma \right) \right| = \left| A \circ \sigma^n - A \right| $ is sufficiently small for any recurrent point, due to the continuity of $ A $.
Furthermore, if $ x \in \Omega \left( A \circ \sigma \right) $, then, given $ k > 0 $, there exist a point $ w \in \Sigma $ and an integer $ n > 0 $ such that 
$ d (x, w) < \lambda^k $,
$ d (\sigma^{n} (w), x) < \lambda^k $
and 
$ - \lambda^k 
< S_n \left( \beta_{ A \circ \sigma } - A \circ \sigma \right) (w) 
= S_n \left( \beta_A - A \right) \left( \sigma(w) \right) 
< \lambda^k $.
Since
$ d \left( \sigma(x), \sigma(w) \right) < \lambda^{k-1} $ and
$ d \left( \sigma^n(\sigma(w)), \sigma(x) \right) < \lambda^{k-1} $,
we have that $ \sigma (x) \in \Omega (A) $ and thus $ \sigma \left( \Omega \left( A \circ \sigma \right) \right) \subset \Omega (A) $.

For item~\ref{item-aubry-closed}, let $ x $ be a limit point of some convergent sequence in $ \Omega (A) $. 
Given $ \varepsilon > 0 $, there exist an Aubry point $ \bar{x} $ in that sequence, satisfying $ d(x, \bar{x}) \leq \frac{\varepsilon}{2} $, a point $ w \in \Sigma $ and an integer $ n > 0 $ such that 
$ d(\bar{x}, w) < \frac{\varepsilon}{2} $,
$ d(\sigma^n(w), \bar{x}) < \frac{\varepsilon}{2} $
and 
$ - \frac{\varepsilon}{2} < S_n( \beta_A - A )(w) < \frac{\varepsilon}{2} $.
By the triangular inequality, $ w $ fulfills the condition in Definition~\ref{definition-aubry-set} with respect to $ x $ and $ \varepsilon $.
Therefore, $ x \in \Omega (A) $ and $ \Omega (A) $ is closed.

Finally, item~\ref{item-aubry-maximizing-set} follows as a direct application of Atkinson's theorem~\cite{Atk:JLMS76}.
\end{proof}

The behavior of the Ma\~n\'e potential and the Peierls barrier on the diagonal and the Aubry set are intimately related.
The following alternative characterization of Aubry points, given by Corollary~4.5 of~\cite{Gar:SPR17}, allows us to be more precise.

\begin{lemma}   \label{lemma-aubry-recurrent-definition}
Let $ A \,:\, \Sigma \to \mathbb{R} $ be a potential.
Then, $ x \in \Omega (A) $ if and only if
for any $ \varepsilon > 0 $ and for all integer $ L \geq 1 $, there are a point $ w \in \Sigma $ and an integer $ n \geq L $ such that 
$ d ( x, w ) < \varepsilon $, 
$ d ( \sigma^n (w), x ) < \varepsilon $
and
\[
- \varepsilon \leq S_n ( \beta_A - A)(w) \leq \varepsilon.
\]
\end{lemma}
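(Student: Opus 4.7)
The backward direction is immediate: if the strengthened recurrence condition holds, applying it with $L = 1$ gives exactly the Aubry definition. So the substance lies in the forward direction, where we must upgrade the existence of \emph{some} return time $n > 0$ to the existence of arbitrarily large ones.

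My plan is to argue by contradiction. Fix $x \in \Omega(A)$ and suppose there exist $\varepsilon > 0$ and $L \geq 1$ such that no pair $(w, n)$ with $n \geq L$ satisfies the three conditions
\[
d(x,w) < \varepsilon, \quad d(\sigma^n(w), x) < \varepsilon, \quad |S_n(\beta_A - A)(w)| < \varepsilon.
\]
Applying the definition of Aubry point with a sequence of tolerances $\delta_k \downarrow 0$ (with $\delta_k < \varepsilon$), I obtain pairs $(w_k, n_k)$ fulfilling the Aubry condition at level $\delta_k$; by our standing assumption each such $n_k$ must lie in $\{1, \ldots, L-1\}$. By pigeonhole, some value $n_0 < L$ is attained infinitely often, so passing to a subsequence I may assume $n_k \equiv n_0$.

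The next step is to extract a periodic orbit. Since $d(x, w_k) < \delta_k \to 0$, we have $w_k \to x$ in $\Sigma$, and then by continuity of $\sigma^{n_0}$, $\sigma^{n_0}(w_k) \to \sigma^{n_0}(x)$. Combined with $d(\sigma^{n_0}(w_k), x) < \delta_k \to 0$, this forces $\sigma^{n_0}(x) = x$, so $x$ is $n_0$-periodic. Moreover, $S_{n_0}(\beta_A - A)$ is continuous as a finite sum of continuous functions, and the estimate $|S_{n_0}(\beta_A - A)(w_k)| < \delta_k \to 0$ yields $S_{n_0}(\beta_A - A)(x) = 0$.

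With $x$ periodic and the Birkhoff sum vanishing on the orbit, I now close the contradiction: choose an integer $M$ with $M n_0 \geq L$ and take $w = x$, $n = M n_0$. Then $d(x, w) = 0$, $d(\sigma^n(w), x) = 0$ (by periodicity), and $S_n(\beta_A - A)(x) = M \cdot S_{n_0}(\beta_A - A)(x) = 0$, so all three inequalities hold strictly, contradicting the assumption that no pair with $n \geq L$ works. The main (mild) obstacle is simply realising that one needs to separate the two regimes --- either the Aubry return times can be made large directly, or they are bounded, in which case continuity forces periodicity and one manufactures large return times by iterating the period. No use of summable variation is required here, since the argument is purely topological and exploits continuity of finite Birkhoff sums.
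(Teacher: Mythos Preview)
Your argument is correct. The paper does not supply its own proof of this lemma but simply refers to Corollary~4.5 of~\cite{Gar:SPR17}, so a direct line-by-line comparison is not possible from the text itself. Your route---extract via pigeonhole a fixed return time $n_0$ from a sequence of Aubry witnesses at scales $\delta_k \downarrow 0$, use continuity of $\sigma^{n_0}$ and of the finite Birkhoff sum $S_{n_0}(\beta_A - A)$ to conclude that $x$ is $n_0$-periodic with $S_{n_0}(\beta_A - A)(x) = 0$, and then take $w = x$ with $n$ a multiple of the period---is clean and self-contained, and it uses only the continuity of $A$, exactly matching the hypotheses stated.
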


\begin{proof}
Let $ x \in \Omega (A) $.
It is enough to argue that if $x$ admits a bounded family of positive integers $ \{ n ( \varepsilon ) \}_{ \varepsilon > 0 } $, each one satisfying the conditions in the Definition~\ref{definition-aubry-set} with respect to an associated point $ w ( \varepsilon ) $, then $ x $ is periodic and the ergodic $ \sigma $-invariant probability supported on its orbit is an $A$-maximizing measure.
In fact, we have 
$ d \left( x,  w ( \varepsilon ) \right) < \varepsilon $,
$ d \left( \sigma^{ n ( \varepsilon ) } \left( w ( \varepsilon ) \right), x \right) < \varepsilon $
and 
$ - \varepsilon < S_{ n ( \varepsilon ) } ( \beta_A - A ) \left( w ( \varepsilon ) \right) < \varepsilon $,
for each $ \varepsilon > 0 $.
It is straightforward to obtain a constant subsequence of integers
$ n (\varepsilon_k) = N $, for every integer $ k \geq 1 $,
with $ \varepsilon_k \to 0 $ as $ k \to \infty $. 
In particular, by the triangle inequality,
$ d \left( \sigma^N (x), x \right) 
< \left( 1 + \lambda^{-N} \right) \, \varepsilon_k $,
for all $ k $ sufficiently large, which implies that $ \sigma^N (x) = x $.
Moreover, since
$ - \varepsilon_k 
< S_{ N } ( \beta_A - A ) \left( w ( \varepsilon_k ) \right) 
< \varepsilon_k $, taking the limit as $ k \to \infty $,
it follows that
$ S_{N} A (x)  = N \, \beta_A $.
Due to the periodicity of $ x $, we conclude that 
$ S_{ l N } A (x) = \sum_{i = 0}^{l-1} S_{ N } A \left( \sigma^{ i N } (x) \right) = l \, N \, \beta_A $ for all~$ l \geq 1 $, which allows us to  consider~$ l \, N \geq L $.
\end{proof}

\begin{proposition}   \label{proposition-aubry-mane-peierls-zero}
Let $ A \,:\, \Sigma \to \mathbb{R} $ be a potential of summable variation. Then
\[
x \in \Omega (A)
\qquad\Longleftrightarrow\qquad
\phi_A ( x, x ) = h_A ( x, x ) = 0.
\]
\end{proposition}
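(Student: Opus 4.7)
The plan is to prove both implications via the auxiliary functions $\mathfrak{S}_l^k$. The bridge is a general lower bound $\phi_A(y,y) \geq 0$, valid for every $y \in \Sigma$, combined with the elementary observation that membership in $\Omega(A)$ forces $h_A(y,y) \leq 0$. Coupling these with the sandwich~\eref{equation-mane-peierls-inequality} will squeeze both quantities to zero on Aubry points; the converse will then invoke $h_A(x,x) = 0$ to reproduce the criterion supplied by Lemma~\ref{lemma-aubry-recurrent-definition}.

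I would first establish the universal inequality $\phi_A(x,x) \geq 0$ by mimicking the closing-up-a-loop trick from the proof of Proposition~\ref{proposition-mane-minus-infinity-critical-value}. Arguing by contradiction, suppose $\phi_A(x,x) = -\alpha$ with $\alpha > 0$, where finiteness is already granted by Proposition~\ref{proposition-mane-minus-infinity-critical-value} at $\gamma = \beta_A$. Summability of the variation lets me pick $k$ large enough that $\mathrm{Var}_{k+1}^\infty(A)$ is negligible against $\alpha$. Monotonicity~\eref{equation-auxiliary-monotonicity-k} and the infimum defining $\mathfrak{S}_1^k(x,x)$ then produce a near-minimizing pair $(w,n)$ whose first $k$ symbols and whose symbols at positions $n, \ldots, n+k-1$ coincide with those of $x$, while $S_n(\beta_A - A)(w)$ dips below the tail-variation margin. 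The block $(w_0, \ldots, w_{n-1})$ concatenated with itself yields a bona fide $z = \sigma^n(z) \in \Sigma$; comparing $z$ with $w$ coordinate by coordinate, the distortion $|S_n A(z) - S_n A(w)|$ is bounded by $\mathrm{Var}_{k+1}^\infty(A)$, producing $S_n(\beta_A - A)(z) < 0$ and contradicting Lemma~\ref{lemma-beta-critical}.

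For the forward implication, with $x \in \Omega(A)$ fixed, I would apply Lemma~\ref{lemma-aubry-recurrent-definition} with $L = l$ and $\varepsilon = \min\{\lambda^k, \delta\}$ to produce, for every $k, l \geq 1$ and every tolerance $\delta > 0$, a witness showing $\mathfrak{S}_l^k(x,x) \leq \delta$; sending $\delta \to 0$ and then taking the sup over $k, l$ gives $h_A(x,x) \leq 0$, and in tandem with the previous paragraph and~\eref{equation-mane-peierls-inequality} one obtains $\phi_A(x,x) = h_A(x,x) = 0$. For the converse, given $\varepsilon > 0$ and $L \geq 1$, I would fix $k$ with $\lambda^k < \varepsilon$ and exploit that $h_A(x,x)$ equals zero as the increasing supremum $\sup_{k',l'} \mathfrak{S}_{l'}^{k'}(x,x)$ to select $k' \geq k$ and $l' \geq L$ with $-\varepsilon/2 < \mathfrak{S}_{l'}^{k'}(x,x) \leq 0$; a pair $(w, n)$ approximating the defining infimum within $\varepsilon/2$ then meets every clause of Lemma~\ref{lemma-aubry-recurrent-definition}.

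The main obstacle is the closing-up step underlying $\phi_A(x,x) \geq 0$: converting a long, nearly-closed trajectory into a genuine periodic orbit while controlling the change of the Birkhoff sum is where the summable variation hypothesis becomes indispensable, and it is the only point in the argument that does more than unpack the definitions of $\mathfrak{S}_l^k$, $\phi_A$, $h_A$, and $\Omega(A)$.
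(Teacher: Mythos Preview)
Your argument is correct, but the route you take for the universal lower bound $\phi_A(x,x)\ge 0$ is heavier than the paper's. The paper obtains this inequality in one line from the triangle inequality~\eref{equation-mane-triangle-inequality}: since $\phi_A(x,\sigma(x))$ is finite (by~\eref{equation-mane-orbit-inequality} and Corollary~\ref{corollary-mane-peierls-minus-infinity}), the relation $\phi_A(x,\sigma(x)) \le \phi_A(x,x)+\phi_A(x,\sigma(x))$ can be rearranged to give $\phi_A(x,x)\ge 0$. You instead re-run the closing-up argument of Proposition~\ref{proposition-mane-minus-infinity-critical-value} at level $\gamma=\beta_A$, now with a large parameter $k$ so that the distortion bound $\mathrm{Var}_{k+1}^\infty(A)$ beats the hypothetical gap $\alpha>0$. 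Your version is self-contained and makes the role of summable variation explicit, but it duplicates work already packaged in the triangle inequality (which itself rests on Lemma~\ref{lemma-auxiliary-inequality} and hence on summable variation). A further minor divergence: for the converse you use $h_A(x,x)=0$ and the double monotonicity of $\mathfrak S_{l}^{k}$ to verify the recurrent criterion of Lemma~\ref{lemma-aubry-recurrent-definition} with arbitrary $L$, whereas the paper uses only $\phi_A(x,x)=\sup_k \mathfrak S_1^k(x,x)=0$ and checks Definition~\ref{definition-aubry-set} directly. Both approaches are valid; the paper's are simply shorter because they lean on machinery already in place.
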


\begin{proof}
Note first that $ 0 \leq \phi_A ( x, x ) \leq h_A ( x, x ) \leq +\infty $ for any $ x \in \Sigma $.
As a matter of fact,
$ \phi_A \left( x, \sigma (x) \right) \in \mathbb{R} $
by inequality~\eref{equation-mane-orbit-inequality} and Corollary~\ref{corollary-mane-peierls-minus-infinity}.
From inequalities~\eref{equation-mane-triangle-inequality} and~\eref{equation-mane-peierls-inequality}, it follows that 
$ 0 = \phi_A \left( x, \sigma (x) \right) - \phi_A \left( x, \sigma (x) \right) \leq \phi_A ( x, x ) \leq h_A ( x, x ) $ for all $ x \in \Sigma $.

Let $ x \in \Omega (A) $. 
From Lemma~\ref{lemma-aubry-recurrent-definition},  for every $ k \geq 0 $ and for all integer $ L \geq 1 $, there are  $ w \in \Sigma $ and $ n \geq L $ such that 
$ d ( x, w ) < \lambda^k $,
$ d ( \sigma^n(w), x ) < \lambda^k  $ and
\[
\mathfrak{S}_{L}^{k} (x, x) \leq S_n ( \beta_A - A ) (w) \leq \lambda^k.
\]
By taking the supremum with respect to $ L \geq 1 $ and passing to the limit as $ k \to \infty $, we obtain $ h_A ( x, x ) \leq 0 $.

Reciprocally, suppose $ h_A(x,x) = 0 $. In particular, $ \phi_A ( x, x ) = \sup_{k \geq 0} \, \mathfrak{S}_{1}^{k} ( x, x ) = 0 $.
Thus, given $ \varepsilon > 0 $, there is $ K \geq 0  $ such that 
$ - \varepsilon < \mathfrak{S}_{1}^{k} ( x, x ) \leq 0 $
for any $ k \geq K $. We may assume that $ \lambda^K \leq \varepsilon $.
For a fixed $ k \ge K $, since $ \mathfrak{S}_{1}^{k} ( x, x ) $ is an infimum, 
there are a point $ w \in \Sigma $ and an integer $ n \geq 1 $ fulfilling
$ d ( x, w ) \leq \lambda^k \leq  \varepsilon $, \,
$ d ( \sigma^n (w), x ) \leq \lambda^k \leq  \varepsilon $ and
\[
- \varepsilon 
< \mathfrak{S}_{1}^{k} ( x, x ) 
\leq S_n ( \beta_A - A ) (w) 
< \mathfrak{S}_{1}^{k} ( x, x ) + \varepsilon 
\leq \varepsilon.
\]
Therefore, $ x $ is an Aubry point.
\end{proof}

\subsection{Sub-action}
\label{subsection-sub-action}

We will show that the existence of an Aubry point $ x $ ensures that there is always a continuous sub-action, precisely the function
\[
y \in \Sigma  \,\, \longmapsto \,\, u_x(y):= \phi_A(x, y) = h_A(x, y) \in \mathbb R.
\]
The first step is to observe that we are dealing with a real-valued function.

\begin{proposition}   \label{proposition-mane-peierls-real-valued} 
Let $ A \,:\, \Sigma \to \mathbb{R} $ be a potential of summable variation.
If $ h_A (x, z) \in \mathbb{R} $ for some $ x, z \in \Sigma $, then 
\[
\phi_A \left( x, y \right) \in \mathbb{R} 
\qquad \mbox{and} \qquad
h_A \left( x, y \right) \in \mathbb{R}, \qquad \forall \, y \in \Sigma.
\]
\end{proposition}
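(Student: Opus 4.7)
The plan is to split the proof into a trivial lower-bound half and a substantive upper-bound half. For the lower bound, I would invoke Corollary~\ref{corollary-mane-peierls-minus-infinity} together with inequality~\eref{equation-mane-peierls-inequality} to obtain $-\infty < \phi_A(x, y) \leq h_A(x, y)$ for every $y \in \Sigma$. Consequently, only the possibility $h_A(x, y) = +\infty$ remains to be ruled out, and the corresponding conclusion for $\phi_A(x, y)$ will then follow at once from $\phi_A(x, y) \leq h_A(x, y)$.

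For the upper bound I would appeal to Lemma~\ref{lemma-auxiliary-inequality}, using $z$ as the junction point and the smallest admissible indices $\bar k = 1$ and $m = 1$. For every integer $k \geq 1$ and every $l \geq k - 1$, the lemma yields
\[
\mathfrak S_{l+1}^{k}(x, y) \leq \mathfrak S_{l}^{k}(x, z) + \mathfrak S_{1}^{1}(z, y) + \mathrm{Var}_{1}^{\infty}(A).
\]
The term $\mathfrak S_{1}^{1}(z, y)$ is finite by~\eref{equation-auxiliary-non-plus-infinity} (regardless of the pair $z, y$), and $\mathrm{Var}_{1}^{\infty}(A) < +\infty$ by the summable variation hypothesis, so only the first summand on the right still depends on $k$ and $l$.

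Next, letting $l \to \infty$ for fixed $k$ and exploiting the monotonicity~\eref{equation-auxiliary-monotonicity-l} — so that $\sup_l = \lim_l$ and shifting the index $l \mapsto l+1$ is harmless — I would bound $\lim_l \mathfrak S_l^k(x, z)$ by $h_A(x, z)$, obtaining a right-hand side that is finite and independent of $k$. Finally,~\eref{equation-auxiliary-monotonicity-k} allows me to take the supremum over $k \geq 1$ (which coincides with the supremum over $k \geq 0$ by the same monotonicity) on the left, yielding
\[
h_A(x, y) \leq h_A(x, z) + \mathfrak S_{1}^{1}(z, y) + \mathrm{Var}_{1}^{\infty}(A) < +\infty,
\]
which closes the argument. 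I do not foresee a serious obstacle: the whole argument rests on transferring finiteness across endpoints through the concatenation inequality of Lemma~\ref{lemma-auxiliary-inequality}, and the only subtle point is the bookkeeping on the order of suprema and the choice of junction parameters $(\bar k, m)$ so that the correction term $\mathfrak S_{m}^{\bar k}(z, y)$ is \emph{a priori} finite.
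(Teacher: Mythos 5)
Your argument is correct and follows essentially the same route as the paper: split off the trivial $>-\infty$ half via Corollary~\ref{corollary-mane-peierls-minus-infinity} and inequality~\eref{equation-mane-peierls-inequality}, then transfer finiteness from $h_A(x,z)$ to $h_A(x,y)$ through the concatenation inequality of Lemma~\ref{lemma-auxiliary-inequality} with $z$ as junction point, using~\eref{equation-auxiliary-non-plus-infinity} to keep the correction term finite. The only differences are cosmetic — the paper takes $m=0$ (which is in fact the smallest admissible value, not $m=1$ as you assert) and keeps $\bar k$ arbitrary, while you fix $(\bar k, m)=(1,1)$ — but both choices yield the same finite right-hand side, so the argument closes in the same way.
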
 

\begin{proof}
Property~\eref{equation-auxiliary-monotonicity-l} and Lemma~\ref{lemma-auxiliary-inequality} (with $ m = 0 $) provide for any $ x, y, z \in \Sigma $,
\[
\sup_{ l \geq 1 } \, \mathfrak{S}_{l}^{k} \left( x, y \right) \leq
\sup_{ l \geq k - \bar{k} } \, \mathfrak{S}_{l}^{k} \left( x, y \right)  
\leq \sup_{ l \geq 1 } \, \mathfrak{S}_{l}^{k} \left( x, z \right)
+ \mathfrak{S}_{0}^{\bar{k}} \left( z, y \right) 
+ 2 \, \mathrm{Var}^{\infty}_{1} (A),
\]
where $ k \geq \bar{k} > 0 $.
By passing to the limit as $ k \to \infty $, and recalling inequality~\eref{equation-mane-peierls-inequality}, we obtain for any $ \bar{k} > 0 $,
\[
\phi_A \left( x, y \right) 
\leq h_A \left( x, y \right) 
\leq h_A \left( x, z \right) + \mathfrak{S}_{0}^{\bar{k}} \left( z, y \right) + 2 \, \mathrm{Var}^{\infty}_{1} (A).
\]
We conclude the result applying Corollary~\ref{corollary-mane-peierls-minus-infinity}, the hypothesis and~\eref{equation-auxiliary-non-plus-infinity}.
\end{proof}

\begin{corollary}   \label{corollary-aubry-mane-peierls-real-valued}
Let $ A \,:\, \Sigma \to \mathbb{R} $ be a potential of summable variation.
If $ x \in \Omega (A) $, then 
\[
\phi_A (x, y) = h_A (x, y) \in \mathbb{R}, \qquad \forall \, y \in \Sigma.
\]
\end{corollary}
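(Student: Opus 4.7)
The plan is to combine Proposition~\ref{proposition-aubry-mane-peierls-zero}, Proposition~\ref{proposition-mane-peierls-real-valued}, and the triangle inequalities from Proposition~\ref{proposition-mane-peierls-triangular-inequalities} in three short moves. The idea is: the Aubry hypothesis pins down $h_A(x,x)=\phi_A(x,x)=0$, which is real, and then the already established propagation lemma for finiteness transfers this to every second argument $y$; equality on the diagonal (namely $0=0$) then forces equality everywhere via triangle inequalities.

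First I would invoke Proposition~\ref{proposition-aubry-mane-peierls-zero}: since $x\in\Omega(A)$, one has $h_A(x,x)=\phi_A(x,x)=0\in\mathbb{R}$. In particular, the hypothesis of Proposition~\ref{proposition-mane-peierls-real-valued} is met with the choice $z=x$. Applying that proposition directly yields $\phi_A(x,y)\in\mathbb{R}$ and $h_A(x,y)\in\mathbb{R}$ for every $y\in\Sigma$. This already settles the finiteness half of the claim.

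It remains to prove the equality $\phi_A(x,y)=h_A(x,y)$. The inequality $\phi_A(x,y)\le h_A(x,y)$ is~\eref{equation-mane-peierls-inequality}. For the reverse, I would apply the triangle inequality~\eref{equation-mane-peierls-triangle-inequality-2} with the middle point equal to $x$ itself:
\[
h_A(x,y)\le h_A(x,x)+\phi_A(x,y)=0+\phi_A(x,y)=\phi_A(x,y).
\]
Here every term is finite by the previous step, so the manipulation is unambiguous. Combining the two inequalities yields $\phi_A(x,y)=h_A(x,y)$, which concludes the argument.

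There is no real obstacle here: the statement is essentially a corollary in the literal sense, repackaging what has already been established. The only thing to be mildly careful about is justifying the algebra $h_A(x,x)+\phi_A(x,y)=\phi_A(x,y)$, which requires knowing that $h_A(x,x)$ is an honest real number (not $+\infty$), and this is exactly what Proposition~\ref{proposition-aubry-mane-peierls-zero} guarantees under the Aubry hypothesis.
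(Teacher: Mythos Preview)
Your proof is correct and follows essentially the same approach as the paper: both combine Proposition~\ref{proposition-aubry-mane-peierls-zero} with Proposition~\ref{proposition-mane-peierls-real-valued} for finiteness, and use inequalities~\eref{equation-mane-peierls-inequality} and~\eref{equation-mane-peierls-triangle-inequality-2} together with $h_A(x,x)=0$ for the equality. The only cosmetic difference is that the paper presents the chain $\phi_A(x,y)\le h_A(x,y)\le h_A(x,x)+\phi_A(x,y)=\phi_A(x,y)$ first and finiteness second, whereas you do it in the reverse order.
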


\begin{proof}
The equality between the Ma\~n\'e potential and the Peierls barrier in this case follows from inequalities~\eref{equation-mane-peierls-inequality} and \eref{equation-mane-peierls-triangle-inequality-2}, and from Proposition~\ref{proposition-aubry-mane-peierls-zero}, since
\[
\phi_A (x,y) 
\leq h_A (x,y) 
\leq h_A (x,x) + \phi_A (x,y) 
= \phi_A (x,y). 
\]
Besides, $ h_A (x, y) \in \mathbb{R} $ thanks to Propositions~\ref{proposition-aubry-mane-peierls-zero} and~\ref{proposition-mane-peierls-real-valued}.
\end{proof}

The Peierls barrier is continuous with respect to the second variable.

\begin{proposition}   \label{proposition-peierls-summable-variation}
Let $ A \,:\, \Sigma \to \mathbb{R} $ be a potential of summable variation.
If $ h_A ( \bar{x}, z ) \in \mathbb{R} $ for some $ \bar{x}, z \in \Sigma $, then the map
\[
y \in \Sigma \,\, \longmapsto \,\, h_A (\bar{x}, y) \in \mathbb{R}
\]
is continuous with $\ell$-th variation bounded from above by $ 2 \, \mathrm{Var}_{\ell}^\infty (A) $.
In particular, if $ \sum_{\ell = 1}^\infty \mathrm{Var}_{\ell}^\infty (A) < + \infty $, then $  h_A (\bar{x}, \cdot) $  is of summable variation.
\end{proposition}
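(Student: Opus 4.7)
The plan is to establish the variational bound $\mathrm{Var}_\ell(h_A(\bar x, \cdot)) \leq \mathrm{Var}_\ell^\infty(A)$ directly from the auxiliary inequality of Lemma~\ref{lemma-auxiliary-inequality}, from which continuity and summable variation of $h_A(\bar x, \cdot)$ will both follow at once. As a preliminary step, I would invoke Proposition~\ref{proposition-mane-peierls-real-valued} to guarantee that $h_A(\bar x, y) \in \mathbb R$ for every $y \in \Sigma$, so that all quantities manipulated below are finite.

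Fix $y, z \in \Sigma$ with $d(y,z) \leq \lambda^{\bar k}$ for some integer $\bar k \geq 1$. Applied with $m = 0$, Lemma~\ref{lemma-auxiliary-inequality} reads
\[
\mathfrak{S}_{l}^{k}(\bar x, z) \leq \mathfrak{S}_{l}^{k}(\bar x, y) + \mathfrak{S}_{0}^{\bar k}(y, z) + \mathrm{Var}_{\bar k}^{\infty}(A),
\]
valid for every $k \geq \bar k$ and every $l \geq k - \bar k$. The middle term is non-positive: choosing $n = 0$ and $w = y$ in Definition~\ref{definition-auxiliary-function} is admissible since $d(y, w) = 0$ and $d(w, z) = d(y, z) \leq \lambda^{\bar k}$, and this choice yields $S_0(\beta_A - A)(y) = 0$. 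Now the monotonicity in $l$ given by \eqref{equation-auxiliary-monotonicity-l} means the supremum of $\mathfrak{S}_l^k$ over $l \geq 1$ agrees with the supremum over $l \geq \max(1, k-\bar k)$, while \eqref{equation-auxiliary-monotonicity-k} ensures the supremum over $k \geq 0$ agrees with the supremum over $k \geq \bar k$. Taking these suprema on both sides of the displayed inequality therefore delivers
\[
h_A(\bar x, z) \leq h_A(\bar x, y) + \mathrm{Var}_{\bar k}^{\infty}(A).
\]

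Swapping the roles of $y$ and $z$ produces the reverse inequality, so with $\bar k = \ell$ I obtain $|h_A(\bar x, y) - h_A(\bar x, z)| \leq \mathrm{Var}_\ell^\infty(A)$ whenever $d(y, z) \leq \lambda^\ell$, which is exactly the announced bound on the $\ell$-th variation. Continuity is then automatic because summability of $\mathrm{Var}_\ell(A)$ forces the tails $\mathrm{Var}_\ell^\infty(A)$ to tend to zero, and summable variation under the stronger hypothesis follows by direct summation $\sum_{\ell \geq 1} \mathrm{Var}_\ell(h_A(\bar x, \cdot)) \leq \sum_{\ell \geq 1} \mathrm{Var}_\ell^\infty(A) < +\infty$. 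The only delicate bookkeeping in this scheme is handling the constraint $l \geq k - \bar k$ while passing to the supremum defining $h_A$; the monotonicity of $\mathfrak{S}_l^k$ in $l$ is precisely what legalizes that passage, and I expect this to be the main point requiring care in the write-up.
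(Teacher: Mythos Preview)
Your proof is correct and follows essentially the same route as the paper's: both invoke Proposition~\ref{proposition-mane-peierls-real-valued} for finiteness, apply Lemma~\ref{lemma-auxiliary-inequality} with $m=0$, observe that $\mathfrak{S}_0^{\bar k}(y,z)\le 0$ when $d(y,z)\le\lambda^{\bar k}$, and then use the monotonicity~\eref{equation-auxiliary-monotonicity-l} to pass from the restricted range $l\ge k-\bar k$ to the full supremum defining $h_A$. Your write-up is in fact slightly more explicit than the paper's in justifying the passage to the supremum (citing both~\eref{equation-auxiliary-monotonicity-l} and~\eref{equation-auxiliary-monotonicity-k}), and you make the symmetry step explicit, but the argument is the same.
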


\begin{proof}
Proposition~\ref{proposition-mane-peierls-real-valued} guarantee that $ h_A (\bar{x}, \,\cdot\, ) $ is a real-valued function.
Consider points $ x, y \in \Sigma $ such that $ d ( x, y ) \le \lambda^{ \bar{k} } $, with $ \bar{k} > 0 $.
From property~\eref{equation-auxiliary-monotonicity-l} and Lemma~\ref{lemma-auxiliary-inequality} (with $ m = 0 $), we obtain 
\[
\sup_{ l \geq 1 } \, \mathfrak{S}_{l}^{k} \left( \bar{x}, x \right) \leq
\sup_{ l \geq k - \bar{k} } \, \mathfrak{S}_{l}^{k} \left( \bar{x}, x \right)  
\leq \sup_{ l \geq 1 } \, \mathfrak{S}_{l}^{k} \left( \bar{x}, y \right)
+ \mathfrak{S}_{0}^{\bar{k}} \left( y, x \right) 
+ 2 \, \mathrm{Var}^{\infty}_{\bar{k}} (A),
\]
where $ k \geq \bar{k} > 0 $.
Note that $ \mathfrak{S}_{0}^{\bar{k}} ( y, x ) \leq S_0 ( \beta_A - A ) ( x ) = 0 $. 
As $ k \to \infty $, it follows that
$ h_A \left( \bar{x}, x \right) - h_A \left( \bar{x}, y \right) 
\leq 2 \, \mathrm{Var}^{\infty}_{\bar{k}} (A) $,
for any $ x, y \in \Sigma $ with $ d ( x, y ) \le \lambda^{ \bar{k} } $.
In other terms,
\[ 
\mathrm{Var}_{\ell} \left( h_A (\bar{x}, \, \cdot \, )  \right)
\leq 2 \,  \mathrm{Var}_{\ell}^\infty (A).
\hspace*{7.9cm}\mbox{\qedsymbol}
\]
\let\qed\relax
\end{proof}

\begin{remark}
There is a slight loss of regularity between the potential $ A $ and the associated Peierls barrier, which seems to be natural on non-compact scenarios, see~\cite{GI:LMP22}.    
\end{remark}

Regularity on the first coordinate can be verified for Aubry points.

\begin{proposition}   \label{proposition-mane-peierls-summable-variation-aubry}
Let $ A \,:\, \Sigma \to \mathbb{R} $ be a potential of summable variation.
Then, for any $ \bar{y} \in \Sigma $ fixed, the map 
\[
x \in \Omega (A) \,\, \longmapsto \,\, \phi_A ( x, \bar{y} ) = h_A ( x, \bar{y} ) \in \mathbb{R}
\]
is continuous with $\ell$-th variation bounded from above by $ 2 \, \mathrm{Var}_{\ell}^\infty (A) $.
In particular, if  
$ \sum_{\ell = 1}^\infty \mathrm{Var}_{\ell}^\infty (A) < + \infty $, then $  h_A(\cdot , \bar{y}) |_{ \Omega (A)} $ is  of summable variation.
\end{proposition}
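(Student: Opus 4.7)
First I would observe that the map in question is well-defined and real-valued: for every $x\in\Omega(A)$, Corollary~\ref{corollary-aubry-mane-peierls-real-valued} yields $\phi_A(x,\bar y)=h_A(x,\bar y)\in\mathbb R$. The overall approach mirrors the one of Proposition~\ref{proposition-peierls-summable-variation} for the second variable, but with a crucial substitution: the Aubry recurrence of $x$ replaces the trivial estimate $\mathfrak S_0^{\bar k}(y,x)\le 0$ used there.

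To bound the $\bar k$-th variation, fix $\bar k\ge 1$ and take $x,x'\in\Omega(A)$ with $d(x,x')\le\lambda^{\bar k}$. I would apply Lemma~\ref{lemma-auxiliary-inequality} with the midpoint chosen to be $x$ itself and the endpoint $\bar y$: for every $k\ge\bar k$, $l\ge k-\bar k$ and $m\ge 1$,
\[
\mathfrak S_{l+m}^{k}(x,\bar y)\;\le\;\mathfrak S_{l}^{k}(x,x)\;+\;\mathfrak S_{m}^{\bar k}(x,\bar y)\;+\;\mathrm{Var}_{\bar k}^{\infty}(A).
\]
Proposition~\ref{proposition-aubry-mane-peierls-zero} furnishes $h_A(x,x)=0$ and hence $\mathfrak S_{l}^{k}(x,x)\le 0$, while the locally constant property~(\ref{equation-locally-constant-function}) gives $\mathfrak S_{m}^{\bar k}(x,\bar y)=\mathfrak S_{m}^{\bar k}(x',\bar y)$, since $d(x,x')\le\lambda^{\bar k}$. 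Setting $l=k-\bar k$, then taking the supremum over $m\ge 1$ (monotonicity in the lower index recovers $\sup_{n\ge 1}\mathfrak S_n^{k}(x,\bar y)$ on the left), and finally the supremum over $k\ge\bar k$ lead to
\[
h_A(x,\bar y)\;\le\;h_A(x',\bar y)\;+\;\mathrm{Var}_{\bar k}^{\infty}(A).
\]
Because $x'$ is also an Aubry point, swapping the roles of $x$ and $x'$ produces the reverse inequality. Together they yield $\bigl|h_A(x,\bar y)-h_A(x',\bar y)\bigr|\le\mathrm{Var}_{\bar k}^{\infty}(A)$, which is the desired bound $\mathrm{Var}_{\bar k}\bigl(h_A(\,\cdot\,,\bar y)|_{\Omega(A)}\bigr)\le\mathrm{Var}_{\bar k}^{\infty}(A)$.

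Continuity on $\Omega(A)$ then follows from $\mathrm{Var}_{\bar k}^{\infty}(A)\to 0$ as $\bar k\to\infty$, and the summable-variation conclusion is immediate from the pointwise variation bound. The chief obstacle to anticipate is the asymmetry of Lemma~\ref{lemma-auxiliary-inequality}: the tighter scale $k$ always sits on the first factor, so a naive split through the midpoint $x'$ would require controlling $\mathfrak S_l^{k}(x,x')$ at scales $k>\bar k$, which the Aubry property of $x$ does not supply. Choosing the midpoint to be $x$ itself converts the first factor into the harmless recurrence term $\mathfrak S_l^{k}(x,x)\le 0$, while the swap $x\leftrightarrow x'$ is relocated to the coarse scale $\bar k$ on the second factor, where the locally constant property applies directly.
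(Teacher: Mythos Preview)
Your argument is correct, but it takes a lower-level route than the paper. The paper's proof stays at the level of the Peierls barrier itself: from the triangle inequality~(\ref{equation-peierls-triangle-inequality}) one has $h_A(x,\bar y)-h_A(x',\bar y)\le h_A(x,x')=h_A(x,x')-h_A(x,x)$, and then Proposition~\ref{proposition-peierls-summable-variation} (the second-variable regularity already established for the real-valued map $h_A(x,\cdot)$) bounds the right-hand side by $\mathrm{Var}_{\bar k}^{\infty}(A)$ whenever $d(x,x')\le\lambda^{\bar k}$. You instead descend to the auxiliary functions $\mathfrak S_l^k$, invoke Lemma~\ref{lemma-auxiliary-inequality} directly, and use the locally constant property~(\ref{equation-locally-constant-function}) to transfer from $x$ to $x'$ at the coarse scale $\bar k$. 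Both arguments hinge on the same Aubry input $h_A(x,x)=0$; the paper's version is shorter because it recycles Proposition~\ref{proposition-peierls-summable-variation} as a black box, while yours is more self-contained and makes the mechanism (concatenation plus locally-constant swap) explicit. Your closing paragraph correctly identifies the asymmetry in Lemma~\ref{lemma-auxiliary-inequality}; note that the paper's higher-level route sidesteps that issue entirely, since the triangle inequality~(\ref{equation-peierls-triangle-inequality}) is already symmetric in the required sense.
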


\begin{proof}
Thanks to Corollary~\ref{corollary-aubry-mane-peierls-real-valued},
$ \phi_A ( x, \,\cdot\, ) = h_A ( x, \,\cdot\, ) \in \mathbb{R} $,
for every $ x \in \Omega (A) $.
By inequality~\eref{equation-peierls-triangle-inequality} and Proposition~\ref{proposition-aubry-mane-peierls-zero}, we obtain
\[
h_A (x, \bar{y}) - h_A (y, \bar{y}) 
\leq h_A (x, y) = h_A (x, y) - h_A (x, x),
\]
for all $ x, y \in \Omega (A) $.
Hence, if $ d(x, y) \le \lambda^\ell $ with $ \ell \geq 1 $, Proposition~\ref{proposition-peierls-summable-variation} ensures that
\[ 
\mathrm{Var}_{\ell} \left( h_A ( \,\cdot\, , \bar{y}) |_{ \Omega (A) } \right) \leq 2 \, \mathrm{Var}_{\ell}^\infty (A).
\hspace*{7.05cm}\mbox{\qedsymbol}
\]
\let\qed\relax
\end{proof}

We can now ensure the existence of a basic sub-action, 
obtained from the Ma\~n\'e potential and the Peierls barrier
(also known as \hyperlink{CThm}{Collateral Theorem}).

\begin{theorem}   \label{theorem-sub-action}
Let $ A \,:\, \Sigma \to \mathbb{R} $ be a potential of summable variation.
Then, for any $ x \in \Omega (A) $ fixed, the map 
\[
\begin{array}{ r c l}
u_{x} ( \,\cdot\, ) = \phi_A ( x, \,\cdot\, ) = h_A( x, \,\cdot\, ) \,:\, \Sigma & \longrightarrow & \mathbb{R} \\

y  & \longmapsto & u_x(y) = \phi_A ( x ,y ) = h_A( x, y ) 
\end{array}
\]
is a continuous sub-action with  $\ell$-th variation bounded from above by $ 2 \, \mathrm{Var}_{\ell}^\infty (A) $. 
In particular, if $ \sum_{\ell = 1}^\infty \mathrm{Var}_{\ell}^\infty (A) < + \infty $, then $ u_x $ is a sub-action  of summable variation.
More specifically, if $ A \,:\, \Sigma \to \mathbb{R} $ is a locally H\"older continuous potential, then $ u_x $ is a locally H\"older continuous sub-action.
\end{theorem}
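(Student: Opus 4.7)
The statement is essentially an assembly of the preceding results, so the plan is to identify which pieces cover each claim and then verify the one genuinely new assertion, namely that $u_x$ is a sub-action. First I would observe that, since $x \in \Omega(A)$, Corollary~\ref{corollary-aubry-mane-peierls-real-valued} already ensures $u_x(y) = \phi_A(x, y) = h_A(x, y) \in \mathbb{R}$ for every $y \in \Sigma$, so $u_x$ is a well-defined real-valued function on $\Sigma$. Then, applying Proposition~\ref{proposition-peierls-summable-variation} with $\bar{x} = x$, I obtain the variation bound $\mathrm{Var}_\ell(u_x) \leq \mathrm{Var}_\ell^\infty(A)$ for every $\ell \geq 1$, which in particular gives continuity of $u_x$.

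The only thing left to verify is the sub-action inequality $A(y) + u_x(\sigma(y)) - u_x(y) \leq \beta_A$ for every $y \in \Sigma$. I would deduce this in one line by combining two tools already at hand: the triangle inequality \eref{equation-mane-triangle-inequality} for the Ma\~n\'e potential with intermediate point $y$,
\[
\phi_A(x, \sigma(y)) \leq \phi_A(x, y) + \phi_A(y, \sigma(y)),
\]
and the orbit inequality \eref{equation-mane-orbit-inequality} with $n = 1$, which yields $\phi_A(y, \sigma(y)) \leq \beta_A - A(y)$. Rearranging gives precisely $A(y) + u_x(\sigma(y)) - u_x(y) \leq \beta_A$ as required.

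For the remaining assertions, the summable variation case is immediate: if $\sum_{\ell \geq 1} \mathrm{Var}_\ell^\infty(A) < + \infty$, then $\sum_{\ell \geq 1} \mathrm{Var}_\ell(u_x) \leq \sum_{\ell \geq 1} \mathrm{Var}_\ell^\infty(A) < + \infty$, so $u_x$ has summable variation. For the locally H\"older case, one computes
\[
\mathrm{Var}_\ell^\infty(A) = \sum_{j = \ell}^\infty \mathrm{Var}_j(A) \leq \mathrm{Lip}_{\mathrm{loc}}(A) \sum_{j = \ell}^\infty \lambda^j = \frac{\mathrm{Lip}_{\mathrm{loc}}(A)}{1 - \lambda} \, \lambda^\ell,
\]
so $\mathrm{Var}_\ell(u_x) \leq \frac{\mathrm{Lip}_{\mathrm{loc}}(A)}{1 - \lambda} \lambda^\ell$, showing $u_x$ is locally H\"older with constant at most $\mathrm{Lip}_{\mathrm{loc}}(A)/(1-\lambda)$.

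There is no real obstacle here; the whole infrastructure was built precisely so that this theorem becomes a concatenation of Corollary~\ref{corollary-aubry-mane-peierls-real-valued}, Proposition~\ref{proposition-peierls-summable-variation}, and the two basic inequalities \eref{equation-mane-triangle-inequality} and \eref{equation-mane-orbit-inequality}. The only thing to be mildly careful about is that the triangle inequality is used with a generic $y \in \Sigma$ (not an Aubry point), which is legitimate because Proposition~\ref{proposition-mane-peierls-triangular-inequalities} holds for arbitrary triples.
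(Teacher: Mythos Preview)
Your proposal is correct and follows essentially the same approach as the paper: the sub-action inequality is obtained by combining inequalities~\eref{equation-mane-triangle-inequality} and~\eref{equation-mane-orbit-inequality}, and the regularity claims are delegated to Corollary~\ref{corollary-aubry-mane-peierls-real-valued} and Proposition~\ref{proposition-peierls-summable-variation}. Your write-up merely expands a bit more on the summable variation and locally H\"older conclusions, but the argument is the paper's.
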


\begin{proof}
First, note that $ u_x $ fulfills the inequality in the definition of a sub-action. 
Indeed, by applying inequalities~\eref{equation-mane-triangle-inequality} and~\eref{equation-mane-orbit-inequality}, for every~$ y \in \Sigma $, we get
\begin{eqnarray*}
u_{x} \circ \sigma (y) = \phi_A \left( x, \sigma (y) \right) 
& \leq \phi_A \left( x, y \right) + \phi_A \left( y, \sigma (y) \right) \\
& \leq u_{x} ( y ) + ( \beta_A - A )( y )
= u_{x} ( y ) - A ( y ) + \beta_A.
\end{eqnarray*}
Next, the statements about the regularity of the function $ u_x $ are direct consequences of Corollary~\ref{corollary-aubry-mane-peierls-real-valued} and Proposition~\ref{proposition-peierls-summable-variation}. 
\end{proof}

When the potential $ A $ admits a sub-action $ u $, we introduce its contact locus as 
\[
\mathbb{M}_A \left( u \right) := ( A + u \circ \sigma - u )^{-1} \left( \beta_A \right).
\]
We summarize the main properties of this set including its relations with the Aubry set and the maximizing measures.
Proofs are standard adaptations from the compact case~\cite{Gar:SPR17} and are provided below for completeness. 

\begin{proposition}   \label{proposition-contact-properties}
Let $ A \,:\, \Sigma \to \mathbb{R} $ be a potential and $ u \,:\, \Sigma \to \mathbb{R} $ be any sub-action of~$ A $. 
The following properties hold.
\begin{enumerate}
\item\label{item-contact-closed} $ \mathbb{M}_A \left( u \right) $ is a closed set.
\item\label{item-aubry-subset-contact} 
$ \Omega (A) \subset \mathbb{M}_A \left( u \right) $.
\item\label{item-contact-maximizing-set} If
$ \mu $ is an $A$-maximizing measure, then $ \mathrm{supp} \, \mu \subset \mathbb{M}_A \left( u \right) $.
In particular, $ \mathbb{M}_A \left( u \right) $ is a non-empty set whenever there exists a maximizing probability.
\end{enumerate}
\end{proposition}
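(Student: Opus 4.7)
The plan is to introduce the non-negative continuous function $h := \beta_A - A - u\circ\sigma + u$, whose non-negativity is exactly the sub-action inequality, so that $\mathbb{M}_A(u) = h^{-1}(0)$. This single object handles all three items uniformly and makes the gap to the sub-action explicit along orbits through Birkhoff sums.

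Item~(\ref{item-contact-closed}) is immediate: $A$, $u$ and $\sigma$ are continuous, so $h$ is continuous and $\mathbb{M}_A(u)$ is the preimage of a point under a continuous map. For item~(\ref{item-aubry-subset-contact}), fix $x \in \Omega(A)$ and use Lemma~\ref{lemma-aubry-recurrent-definition} to produce pairs $(w_k, n_k)$ with $w_k \to x$, $\sigma^{n_k}(w_k) \to x$ and $S_{n_k}(\beta_A - A)(w_k) \to 0$. A telescoping computation gives
\[
S_{n_k}(h)(w_k) = S_{n_k}(\beta_A - A)(w_k) + u(w_k) - u(\sigma^{n_k}(w_k)),
\]
so continuity of $u$ forces $S_{n_k}(h)(w_k) \to 0$. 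Every summand $h(\sigma^j(w_k))$ is non-negative, hence in particular $0 \le h(w_k) \le S_{n_k}(h)(w_k) \to 0$, and continuity of $h$ then yields $h(x) = 0$, i.e.\ $x \in \mathbb{M}_A(u)$.

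For item~(\ref{item-contact-maximizing-set}), integrating $h \ge 0$ against an $A$-maximizing measure $\mu$ and using $\sigma$-invariance to cancel $\int u\circ\sigma\,\rmd\mu$ against $\int u\,\rmd\mu$ leaves $\int h\,\rmd\mu = \beta_A - \int A\,\rmd\mu = 0$; combined with $h \ge 0$, this forces $h = 0$ $\mu$-almost everywhere, so $\mathrm{supp}(\mu) \subset \{h = 0\} = \mathbb{M}_A(u)$, the latter set being closed. The only real obstacle is the $\mu$-integrability of $u$ needed for that cancellation to be meaningful on a non-compact phase space; this is automatic when $\mathrm{supp}(\mu)$ is compact (continuous $u$ is then bounded there), which covers the scenarios relevant to the \hyperlink{MThm}{Main Theorem}, and is otherwise handled via truncations of $u$ along the lines of~\cite{Gar:SPR17}.
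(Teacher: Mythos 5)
Your items~\ref{item-contact-closed} and~\ref{item-aubry-subset-contact} are correct and clean: with $h := \beta_A - A - u\circ\sigma + u \ge 0$, the identity $S_n h = S_n(\beta_A - A) + u - u\circ\sigma^n$ plus non-negativity of each summand gives exactly the telescoping argument you need, and continuity of $h$ finishes both items. Note that the defining property of Aubry points (Definition~\ref{definition-aubry-set}) already suffices here; the strengthened recurrence of Lemma~\ref{lemma-aubry-recurrent-definition} is not needed.

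For item~\ref{item-contact-maximizing-set}, however, you have correctly identified the obstacle but not resolved it: the cancellation $\int u\circ\sigma\,\rmd\mu = \int u\,\rmd\mu$ requires $u \in L^1(\mu)$, which is not automatic when $\mathrm{supp}(\mu)$ is non-compact, and the proposition is stated without any compactness assumption. The appeal to ``truncations of $u$'' is not convincing as written: if $u_N := \max(u,-N)$, say, then $u_N$ is no longer a sub-action and the sign control on $h$ is lost, so a naive truncation does not close the argument. The clean way to get the general statement---and the one consistent with the paper's remark that the proof from \cite{Gar:SPR17} carries over unchanged---is to bypass integration of $u$ altogether: Proposition~\ref{proposition-aubry-properties}, item~\ref{item-aubry-maximizing-set} (which rests on Atkinson's theorem and uses only $A\in L^1(\mu)$, automatic for a maximizing measure) already gives $\mathrm{supp}\,\mu \subset \Omega(A)$, and your item~\ref{item-aubry-subset-contact} then yields $\mathrm{supp}\,\mu \subset \mathbb{M}_A(u)$. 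Your direct integration argument remains a perfectly good proof in the compactly supported case, which is indeed all the Main Theorem uses, but the proposition as stated needs the Atkinson route.
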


\begin{proof}
Item~\ref{item-contact-closed} is immediate, since $ A $ and $ u $ are continuous.
To prove item~\ref{item-aubry-subset-contact}, let $ x \in \Omega (A) $. 
For each $ k > 0 $, there exist a point $ w^k \in \Sigma $ and an integer $ n_k > 0 $ such that 
$ d(x, w^k) < \lambda^{k} $,
$ d(\sigma^{n_k} (w^k), x) < \lambda^{k} $
and 
$ - \lambda^{k} < S_{n_k} ( \beta_A - A )(w^k) < \lambda^{k} $.
In particular, we have the following convergences as $ k \to \infty $:
\[
\fl
w^k \to x,
\quad
\sigma^{n_k} (w^k) \to x,
\quad
S_{n_k} ( \beta_A - A )(w^k) \to 0
\qquad 
\mbox{and} 
\qquad
u \circ \sigma^{n_k} (w^k) - u (w^k) \to 0.
\]
Using the definition of a sub-action, note that 
$ 0 \leq \beta_A - ( A - u \circ \sigma + u ) (w^k)
\leq S_{n_k} \left( \beta_A - A + u \circ \sigma - u \right) (w^k) 
= S_{n_k} ( \beta_A - A ) (w^k) + u \circ \sigma^{n_k} (w^k) - u (w^k) $
which converges to zero as $ k \to \infty $.
Hence, $ A - u \circ \sigma + u ) (x) = \beta_A $ and $ x \in \mathbb{M}_A \left( u \right) $.
For item~\ref{item-contact-maximizing-set}, the result follows from item~\ref{item-aubry-maximizing-set} of Proposition~\ref{proposition-aubry-properties} and item~\ref{item-aubry-subset-contact} above. 
\end{proof}

\hypertarget{sec-densely-periodic-optimization}{}
\section{Densely Periodic Optimization}
\label{section-densely-periodic-optimization}

This section is dedicated to prove the \hyperlink{MThm}{Main Theorem}.
The argument is inspired by the proof of Contreras theorem for shifts over finite alphabets \cite{Con:IM16,HLMXZ:JEMS25,Boc:Notes19}.
In particular, we will make use of the following result.

\begin{lemma}[Huang, Lian, Ma, Xu, Zhang]  \label{lemma-HLMXZ}
Let $ \Omega $ be a compact invariant subset of a Markov shift over a finite alphabet.
Then, for any $ \tau > 0 $, there exists a periodic orbit  $ \mathcal O $ such that
\[
\sum_{z \in \mathcal O} d(\Omega, z) < \tau \Delta(\mathcal O), 
\]
where $ \Delta(\mathcal O) $ denotes the half-gap of the orbit: 
$ \Delta(\mathcal O) = \frac{1}{2} \min \, \Big\{ \lambda, \min_{ \stacksub{ y,z \in \mathcal O \\ y \neq z} } d(y,z) \Big\} $\footnote{It is conventional that $ \min \emptyset = \infty $, so that $ \Delta(\mathcal O) = \lambda / 2 $ if $ \mathcal O $ consists of a single fixed point.}.
\end{lemma}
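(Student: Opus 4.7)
My approach is to construct $\mathcal O$ by closing up a very deep near-return of a recurrent orbit already sitting inside $\Omega$. Fix a depth parameter $K\in\mathbb N$, to be chosen later. Since the Markov subshift has finite alphabet and $\Omega$ is a nonempty compact invariant set, the Krylov--Bogolyubov theorem furnishes an invariant probability supported in $\Omega$, and then Poincar\'e recurrence yields $x\in\Omega$ together with $n\geq 1$ such that $d(\sigma^n(x),x)\leq\lambda^K$, equivalently $x_j=x_{j+n}$ for $0\leq j<K$. In particular $x_n=x_0$, so the transition $x_{n-1}\to x_0$ is admissible in the Markov shift (it is observed along $x$), and the cyclic word $\bar x_j:=x_{j\bmod n}$ is a legitimate periodic point of the ambient shift; take $\mathcal O$ to be its orbit.

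For the numerator, I compare $\sigma^i(\bar x)$ with $\sigma^i(x)\in\Omega$ coordinate by coordinate: they agree at position $k$ whenever $x_{(i+k)\bmod n}=x_{i+k}$, which is automatic for $i+k<n$ and is granted by the near-return for $n\leq i+k<n+K$. Thus $d(\sigma^i(\bar x),\Omega)\leq\lambda^{n+K-i}$, and a geometric sum gives
\[
\sum_{z\in\mathcal O}d(\Omega,z)\ \leq\ \sum_{i=0}^{n-1}\lambda^{n+K-i}\ <\ \frac{\lambda^{K+1}}{1-\lambda},
\]
which can be driven below any preassigned level by enlarging $K$.

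The main obstacle is to complement this with a lower bound $\Delta(\mathcal O)\geq\lambda^{L_0}/2$ in which $L_0$ does not grow with $K$: \emph{a priori} two cyclic shifts of $(x_0,\ldots,x_{n-1})$ could coincide on arbitrarily long prefixes, causing some pair of points of $\mathcal O$ to be too close. I would address this by refining the choice of the near-return pair $(x,n)$ to also enforce a dispersion condition, namely that all cyclic shifts of the length-$n$ closing word be distinguished within their first $L_0$ letters. If $\Omega$ already contains a periodic orbit, one simply takes $\mathcal O$ to be that orbit; otherwise, by passing to a minimal infinite subset and exploiting the positive word-complexity of its language---a feature specific to the finite-alphabet setting---one can harvest simultaneously a deep near-return and a dispersive closing word at a uniform scale $L_0$ depending only on $\Omega$ and $\tau$. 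The two estimates then combine into
\[
\frac{\sum_{z\in\mathcal O}d(\Omega,z)}{\Delta(\mathcal O)}\ \leq\ \frac{2\,\lambda^{K+1-L_0}}{1-\lambda}\ <\ \tau,
\]
as soon as $K$ is chosen sufficiently large relative to $L_0$ and $\log\tau$.
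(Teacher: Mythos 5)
Your Steps 1--3 are correct: Poincar\'e recurrence applied to an invariant probability supported in $\Omega$ (Krylov--Bogolyubov) yields a depth-$K$ near-return $(x,n)$; the periodic word $\bar x$ obtained by closing is admissible because $x_n=x_0$ makes the transition $x_{n-1}\to x_0$ legal; and the coordinatewise comparison $d(\sigma^i\bar x,\Omega)\le\lambda^{n+K-i}$ summed over $0\le i<n$ gives $\sum_{z\in\mathcal O}d(\Omega,z)<\lambda^{K+1}/(1-\lambda)$. The reduction to a minimal $\Omega$ and the trivial disposal of the case where $\Omega$ contains a periodic orbit are likewise sound.

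The gap is Step 4, and it is not a technicality but the whole content of the lemma. You correctly name the obstacle, yet the proposed resolution --- a fixed dispersion scale $L_0=L_0(\Omega,\tau)$ with $\Delta(\mathcal O)\ge\lambda^{L_0}/2$, while $K$ is taken large --- is self-defeating in the aperiodic case. Write $N$ for the (finite) alphabet size. If $\Omega$ is aperiodic, the period $n$ of any depth-$K$ near-return tends to $+\infty$ as $K\to\infty$: a bounded period together with unbounded depth would, by compactness, produce a periodic point of $\Omega$. Once $n>N^{L_0+1}$, the pigeonhole on the first $L_0+1$ coordinates forces two distinct points of $\mathcal O$ to agree on more than $L_0$ symbols, hence $\Delta(\mathcal O)<\lambda^{L_0}/2$, destroying the bound you need precisely when $K$ is large. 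What one can extract cheaply (choosing the near-return with minimal period among those of depth $K$, and transferring an overlap of cyclic shifts of $\bar x$ to an overlap of points of $\Omega$) is that the maximal self-overlap length of the closing word is $<K$, hence $\Delta(\mathcal O)\ge\lambda^{K-1}/2$; but this yields only a constant bound $\sum_z d(\Omega,z)/\Delta(\mathcal O)\le 2\lambda^2/(1-\lambda)$, not a ratio that can be driven below every $\tau$. The real work of the lemma is to make the \emph{difference} between the approximation depth and the self-overlap length diverge, and that is exactly what HLMXZ (Proposition~3.1) and Bochi (Lemma~2.3) prove by a genuinely more elaborate combinatorial selection. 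The appeal to ``positive word complexity'' does not substitute for that argument; if anything, higher complexity forces larger periods and therefore \emph{smaller} half-gaps, which is the wrong direction. Since the paper itself merely cites these references for this lemma, your sketch would need to reproduce one of those arguments; as written, it stops short of the essential step.
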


For a proof of this lemma, see \cite[Proposition~2.1]{HLMXZ:JEMS25} or \cite[Lemma~2.3]{Boc:Notes19}.

\begin{proof}[Proof of the \hyperlink{MThm}{Main Theorem}]
Let $ \Omega $ denote the non-empty compact invariant subset of the Aubry set of the potential $ A $ (that can be viewed as being contained in a Markov subshift over a finite alphabet).
For $ x \in \Omega $, we consider the associated locally H\"older continuous sub-action $ u := \phi_A(x, \cdot) = h_A(x, \cdot) $ given by Theorem~\ref{theorem-sub-action}, and we introduce
\[
\hat A := \frac{1}{1 + \mathrm{Lip}_{\mathrm{loc}} \left( A + u\circ\sigma - u \right) } \, \big(A + u\circ\sigma - u - \beta_A \big). 
\]
Note that $\hat A \le 0 $ is a locally H\"older continuous potential, with local H\"older constant less than~$1$, which admits among its maximizing probabilities all $A$-maximizing measures with compact support.

Since maximizing measures are preserved when multiplying a potential by a positive constant, it suffices to show that, given $ \varepsilon > 0 $, there exists a periodic point $ y = y(\varepsilon) \in \Sigma $ such
that, if the perturbed potential 
\[
\hat A - \varepsilon d(\mathrm{orb}(y), \cdot) 
\]
has a maximizing probability, then this is necessarily the periodic measure
\[
\mu_y : = \frac{1 }{\# \mathrm{orb}(y)} \sum_{z \in \mathrm{orb}(y)} \delta_z. 
\]

By taking $ \tau \in (0, \varepsilon) $, we apply Lemma~\ref{lemma-HLMXZ}
to determine a periodic point $ y $ satisfying 
\[
\sum_{z \in \mathrm{orb}(y)} d(\Omega, z) < \tau \Delta,
\]
where we abbreviate  $ \Delta = \Delta( \mathrm{orb}(y) ) $. For concreteness, we choose once and for all
\[
 \tau := \frac{1}{4} \cdot \frac{\varepsilon}{1+ \frac{1}{1-\lambda}\cdot\frac{1}{\varepsilon}}. 
\]
We now denote
\[
\beta(\varepsilon) := \beta_{\hat A - \varepsilon d(\mathrm{orb}(y), \cdot)} \le 0 
\]
and define
\[
\hat B := \hat A - \varepsilon d(\mathrm{orb}(y), \cdot) - \beta(\varepsilon). 
\]
It therefore suffices to show that if there exists a $\hat B$-maximizing probability, then it must necessarily be $ \mu_y $.

We have $  \int \hat A \, d\mu_y = \int [\hat A - \varepsilon d(\mathrm{orb}(y), \cdot)] \, d\mu_y \le \beta(\varepsilon) $. 
In particular, we note that
\begin{equation}  \label{equation-sinal}
\displaystyle \int \hat B \, d\mu_y \le 0.
\end{equation}
Moreover, since $ \mathrm{Lip}_{\mathrm{loc}} \big( \hat A \big) \leq 1 $ and $ \hat A |_\Omega \equiv 0 $ (by item~\ref{item-aubry-subset-contact} of Proposition~\ref{proposition-contact-properties}),
\[
\int \hat A \, d\mu_y
= \frac{1}{\# \mathrm{orb}(y)} \sum_{z \in \mathrm{orb}(y)} \hat A(z)
\ge - \frac{1}{\# \mathrm{orb}(y)} \sum_{z \in \mathrm{orb}(y)} d(\Omega, z), 
\]
which yields 
\[
\int \hat A \, d\mu_y \ge - \frac{1}{\# \mathrm{orb}(y)} \tau \Delta.
\]
We then obtain that 
\begin{equation}  \label{equation-majoracao-perturbacao-constante}
 \hat B \le - \beta(\varepsilon) \le \frac{1}{\# \mathrm{orb}(y)} \tau \Delta.
\end{equation}
Let $ r := - \beta(\varepsilon) / \varepsilon $. We observe that
\begin{equation}  \label{equation-afastamento-sinal}
 d(\mathrm{orb}(y), z) > r \qquad \Rightarrow \qquad \hat B(z) \le 0. 
\end{equation}
Indeed, in this case,
\[
\hat B(z) = \hat A(z) - \varepsilon d(\mathrm{orb}(y),z) - \beta(\varepsilon) \le \beta(\varepsilon) - \beta(\varepsilon) = 0. 
\]

Now suppose that $ \hat B $ admits a maximizing probability. 
From item~\ref{item-aubry-maximizing-set} of Proposition~\ref{proposition-aubry-properties}, we have $ \Omega (\hat B) \neq \emptyset $. Moreover, it suffices to argue that $ \Omega (\hat B) = \mathrm{orb}(y) $ to conclude that $ \mu_y $ is the only possible maximizing measure for $ \hat B $.

Let $ \hat x \in \Omega (\hat B) $. Given $ \hat \varepsilon \in (0, \varepsilon \Delta / 2) $, for every integer $ L \ge 1 $, by Lemma~\ref{lemma-aubry-recurrent-definition}, there exist a point $ w_L \in \Sigma $ and an integer $ N_L \ge L $ such that
$ d ( \hat{x}, w_L ) < \hat{\varepsilon} $,
$ d ( \sigma^{N_L} ( w_L ), \hat{x} ) < \hat{\varepsilon} $ and 
\[
- \hat{\varepsilon} < S_{N_L} \hat{B} ( w_L ) < \hat{\varepsilon}.
\]
Let $ V_{\Delta} \left( \mathrm{orb}(y) \right) $ denote the set of points in $ \Sigma $ that are at distance at most $ \Delta $ from the periodic orbit of $y$. It suffices to show that $ \sigma^\ell(w_L) \in V_{\Delta} \left( \mathrm{orb}(y) \right) $ for all $ 0 \le \ell < N_L $. In fact, if this is the case, then by the definition of $ \Delta $ and by the expansiveness, there exists an integer $ \hat k = \hat k (\hat \varepsilon, L) \ge 0 $ such that 
$ d ( w_L, \sigma^{\hat k}(y) ) \le \lambda^{N_L} \Delta $, which implies  $ d ( \hat x, \sigma^{\hat k}(y) ) < \hat \varepsilon + \lambda^{N_L} \Delta $. Since $ N_L $ can be taken arbitrarily large and $ \hat \varepsilon $ can be taken arbitrarily small, we conclude that $ \hat x \in \mathrm{orb}(y) $. 

We decompose
\[
S_{N_L} \hat B (w_L) = \sum_{m=0}^M \sum_{\ell_m < \ell \le \ell_{m+1}} \hat B(\sigma^\ell(w_L)),
\]
where $ \ell_0 = 0 $ and $ \ell_{M+1} = N_L - 1 $ by convention, and $ 0 \le \ell_1 < \ldots < \ell_M < N_L $ are (if they exist) the successive indices for which $ d(\mathrm{orb}(y), \sigma^{\ell_m}(w_L)) > \Delta $. Our goal is to prove that the choice of $ \hat \varepsilon $ precludes the existence of such indices.

First, we observe that from~\eref{equation-majoracao-perturbacao-constante},
\[
r = \frac{-\beta(\varepsilon)}{\varepsilon} \le \frac{1}{\# \mathrm{orb}(y)} \frac{\tau}{\varepsilon} \Delta \le
 \frac{\tau}{\varepsilon} \Delta < \Delta. 
\]
Whenever the set 
\[
 \{ \ell: \ell_m < \ell < \ell_{m+1} \quad \mathrm{and} \quad
d(\mathrm{orb}(y), \sigma^\ell(w_L)) \le r \} 
\] 
is empty, using~\eref{equation-afastamento-sinal} when necessary, we note that
\[
\sum_{\ell_m < \ell < \ell_{m+1}} \hat B(\sigma^\ell(w_L)) \le 0. 
\]
Otherwise, we denote by $ k_m $ the maximum of this set, and we have
\[
\sum_{\ell_m < \ell < \ell_{m+1}} \hat B(\sigma^\ell(w_L)) \le
\sum_{\ell_m < \ell \le k_m} \hat B(\sigma^\ell(w_L)). 
\]
Since $ d(\mathrm{orb}(y), \sigma^{k_m}(w_L)) \le r < \Delta $, the point on the orbit of $ y $ that achieves this distance is uniquely determined. Denoting it by $ \sigma^{k_m}(z) $ for some $ z \in \mathrm{orb}(y) $, by the definition of $ \Delta $ and by the expansiveness, we obtain 
\[
d(\mathrm{orb}(y), \sigma^\ell(w_L)) = d(\sigma^\ell(z), \sigma^\ell(w_L)) \le \lambda^{k_m - \ell} d( \sigma^{k_m}(z),\sigma^{k_m}(w_L))
\]
for $ \ell \in \{\ell_m + 1, \ldots, k_m\} $.
Recalling that $ \mathrm{Lip}_{\mathrm{loc}} \big( \hat A \big) \leq 1 $, we can estimate
\begin{eqnarray*}
\sum_{\ell_m < \ell \le k_m}& \Big[\hat B(\sigma^\ell(w_L)) - \hat B(\sigma^\ell(z)) \Big] \le \\
& \le (1 + \lambda + \ldots + \lambda^{k_m - (\ell_m+1)}) \, r -
\varepsilon \sum_{\ell_m < \ell \le k_m} d(\mathrm{orb}(y), \sigma^\ell(w_L)) \\
& \le \frac{1}{1-\lambda} r. 
\end{eqnarray*}
Therefore, the following upper bound holds:
\begin{eqnarray*}
\sum_{\ell_m < \ell < \ell_{m+1}} \hat B(\sigma^\ell(w_L)) 
& \le \frac{1}{1-\lambda} r + \left\lfloor \frac{k_m - \ell_m}{\# \mathrm{orb}(y)} \right\rfloor
\# \mathrm{orb}(y) \int \hat B \, d\mu_y \, \\
& \quad
+ (\# \mathrm{orb}(y) - 1)  \frac{1}{\# \mathrm{orb}(y)} \tau \Delta \\
& \le \frac{1}{1-\lambda} r + \Big(1 - \frac{1}{\# \mathrm{orb}(y)} \Big) \tau \Delta \\ 
& \le \Big( \frac{1}{1-\lambda}\cdot\frac{1}{\varepsilon}\cdot\frac{1}{\# \mathrm{orb}(y)} + 1 - \frac{1}{\# \mathrm{orb}(y)}  \Big) \tau \Delta.
\end{eqnarray*}
(We use~\eref{equation-majoracao-perturbacao-constante} to establish the first and third inequalities, and we use only~\eref{equation-sinal} to obtain the second inequality.)
Clearly, for $ 0 \le m < M $,
\begin{eqnarray*}
\hat B(\sigma^{\ell_{m+1}}(w_L)) & = \hat A(\sigma^{\ell_{m+1}}(w_L)) - \varepsilon d(\mathrm{ orb}(y), \sigma^{\ell_{m+1}}(w_L)) - \beta(\varepsilon) \\
& \le - \varepsilon \Delta + \frac{1}{\# \mathrm{orb}(y)} \tau \Delta.
\end{eqnarray*}
Thus, whether or not $ k_m $ exists, for $ 0 \le m < M $, we deduce that
\begin{eqnarray*}
\sum_{\ell_m < \ell \le \ell_{m+1}} \hat B(\sigma^\ell(w_L))
& \le \Big[ \Big( \frac{1}{1-\lambda}\cdot\frac{1}{\varepsilon}\cdot\frac{1}{\# \mathrm{orb}(y)} + 1 \Big) \tau - \varepsilon \Big] \Delta \\
& \le \Big[ \Big( \frac{1}{1-\lambda}\cdot\frac{1}{\varepsilon} + 1 \Big) \tau - \varepsilon \Big] \Delta = - \frac{3\varepsilon}{4}\Delta.
\end{eqnarray*}
For $ \ell_M < N_L - 1 $, the previous bounds give us
\[
 \sum_{\ell_{M-1} < \ell \le \ell_{M+1}} \hat B(\sigma^\ell(w_L)) \le
 \Big[ 2\Big( \frac{1}{1-\lambda}\cdot\frac{1}{\varepsilon} + 1 \Big) \tau - \varepsilon \Big] \Delta = - \frac{\varepsilon}{2}\Delta.
\]
These estimates ensure that
\[
S_{N_L} \hat{B} ( w_L )
\leq - \frac{\varepsilon}{2} \Delta \, S_{N_L} \mathds{1}_{\Sigma \setminus V_{\Delta} \left( \mathrm{orb}(y) \right) } ( w_L ).
\]  
From the very definition of $ w_L $, it follows that 
\[
S_{N_L} \mathds{1}_{\Sigma \setminus V_{\Delta} \left( \mathrm{orb}(y) \right) } ( w_L )
\leq \hat{\varepsilon} \frac{2}{ \varepsilon \, \Delta } < 1.
\]
Thus, we have shown that $ \sigma^{\ell} ( w_L ) \in V_{\Delta} \left( \mathrm{orb}(y) \right) $ for all $ 0 \leq \ell < N_L $.
\end{proof}

\hypertarget{app-A}{} 
\section{Appendix A - The Finite Cyclic Predecessor Case} 
\label{appendix-A} 

In this appendix, we exhibit a family of countable Markov shifts for which a dense subclass of locally H\"older continuous potentials (including coercive ones) admits exactly one periodic probability as maximizing measure. This strengthened result addresses the distinction between ``at most'' and ``exactly one'' discussed in the main text, showing that under additional structural assumptions, periodic optimization can be guaranteed to occur.

The key structural assumption in this context is the finite cyclic predecessor assumption. Intuitively, this condition requires that the shift space contains very well-behaved periodic behavior: for any given period and any initial symbol, there must exist finitely many periodic orbits of that period which begin with that symbol. More precisely, for every positive integer $p$ and for every cylinder set $[i]$, there exist finitely many periodic orbits of period $p$ that intersect $[i]$. The result we invoke also requires that periodic probabilities are dense among invariant measures, which for transitive Markovian shifts is guaranteed by Theorem~4.2 and Section~6 of \cite{CS:IJM10}.

We apply the main result of \cite{GGM:ETDS26} to this setting. Although the original result is more comprehensive and covers other contexts (including non-Markovian scenarios and upper semi-continuous potentials), we present an adapted version tailored to our setting.

\begin{theorem-E} \hypertarget{EThm}{} 
Let $(\Sigma, \sigma)$ be a topologically transitive Markov shift on a countable alphabet that satisfies the finite cyclic predecessor assumption. Then, every continuous potential $B$ fulfilling 
\[ 
\limsup_{i \to \infty} \sup B |_{[i]} < \beta_{B} 
\] 
has a maximizing probability. 
\end{theorem-E}

The central condition demands the potential's supremum on cylinder sets $[i]$ as $i \to \infty$ remains strictly below its ergodic maximizing constant. The key insight is that this requirement compels optimization to occur ``far from the infinite boundaries'' of the symbol space. The techniques in \cite{GGM:ETDS26} rely on the blur shift compactification method, which adds new symbols given by blurred subsets of the alphabet to achieve compactness while preserving the essential dynamics.

From the above result, we present the following strengthened version of our \hyperlink{MThm}{Main Theorem}, in which we can replace ``at most'' by ``exactly one'' in the conclusion.

\begin{theorem-F}  \hypertarget{FThm}{}
Let $ (\Sigma, \sigma) $ be a topologically transitive Markov shift on a countable alphabet that satisfies the finite cyclic predecessor assumption. Let $ A $ be a locally H\"older continuous potential. Suppose that its Aubry set contains a non-empty compact invariant subset. Then, given $ \varepsilon > 0 $, there exists a locally H\"older continuous potential $ B $ such that $ \| A - B \|_{\mathrm{sv}} < \varepsilon $ and $ B $ admits a single periodic probability as maximizing measure.
\end{theorem-F}

\begin{proof} We maintain all notation from the proof of our \hyperlink{MThm}{Main Theorem}, presented in the \hyperlink{sec-densely-periodic-optimization}{previous section}. In particular, we take into account the auxiliary potential $ \hat{A} $ as well as the periodic point  $ y $. The strategy is to show that the perturbed potential 
\[
B = \hat A - \varepsilon d(\mathrm{orb}(y), \cdot) 
\] 
satisfies the condition 
\[
\limsup_{i \to \infty} \sup B|_{[i]} < \beta(\varepsilon),
\] 
which, by the theorem above, guarantees the existence of a $B$-maximizing measure. 
Combined with the uniqueness established in our main theorem, this yields exactly one periodic maximizing measure.

Recall from the main proof that 
\[ 
\int B \, d\mu_y = \int \hat{A} \, d\mu_y \geq - \frac{1}{\# \mathrm{orb}(y)} \tau \Delta, 
\] 
where $\mu_y$ is the periodic measure supported on the orbit of $y$.
From the construction in the main theorem, we have $\tau \Delta < \# \mathrm{orb}(y) \, \varepsilon$. 
This allows us to estimate: 
\begin{eqnarray*}
\limsup_{i \to \infty} \, \sup B|_{[i]} 
& \leq \limsup_{i \to \infty} \, \sup \hat{A}|_{[i]}
- \varepsilon \, \liminf_{i \to \infty} \, \inf  d(\mathrm{orb}(y), \cdot)|_{[i]} 
\\
& \leq - \varepsilon 
< - \frac{1}{\# \mathrm{orb}(y)} \tau \Delta
\leq \beta ( \varepsilon ).
\end{eqnarray*}
The first inequality follows from the definition of $B = \hat{A} - \varepsilon d(\mathrm{orb}(y), \cdot)$ and standard estimates. The second inequality holds because the distance function $d(\mathrm{orb}(y), \cdot)|_{[i]} = 1 $ for  $ i $ large enough, since the periodic orbit is formed by finitely many symbols. The final inequality uses our estimate on $\tau \Delta$ and the definition of $\beta(\varepsilon)$.
Therefore, the perturbed potential $B$ satisfies the hypothesis of the~\hyperlink{EThm}{Existence Theorem}.
\end{proof}

\ack 

E G was supported by FAPESP project 2019/10.269-3 and ANR project IZES ANR-22-CE40-0011.

\section*{References}

\end{document}